\newcommand{\I}{\bm{\mathcal{I}}} 
\newcommand{\E}{\mathbb{E}}    
\newcommand{\V}{\mathbb{V}}    
\newcommand{\Z}{\mathbb{Z}}    
\newcommand{\R}{\mathbb{R}}    
\newcommand{\bO}{\mathcal{O}}  
\newcommand{\Fr}{\mathbb{\mathcal{F}}}  
\newcommand{\Nd}{\mathcal{N}}  
\newcommand{\sv}{\, | \,}      
\newcommand{\omg}{\omega}      
\newcommand{\by}{\bm{y}}       
\newcommand{\bv}{\bm{v}}       
\newcommand{\bu}{\bm{u}}       
\newcommand{\bth}{\bm{\theta}} 
\newcommand{\bS}{\bm{\Sigma}}  
\newcommand{\sem}{\setminus}     
\newcommand{\abs}[1]{\left|#1\right|}      
\newcommand{\cvd}{\rightsquigarrow}        
\newcommand{\dif}{\mathop{}\!\mathrm{d}}   
\newcommand{\norm}[2][]{\left\Vert#2\right\Vert_{#1}} 
\newcommand{\set}[1]{\left\{ #1 \right\}}             
\newcommand{\mat}[1]{\begin{bmatrix}#1\end{bmatrix}}  
\newcommand{\tr}{\mathrm{tr}}
\DeclareMathOperator*{\argmin}{arg\,min}   
\numberwithin{equation}{section}
\newcommand{\bU}{\bm{U}}       
\newcommand{\bV}{\bm{V}}       
\newcommand{\bC}{\bm{C}}       
\newcommand{\new}[1]{\textcolor{black}{#1}}
\newcommand{\tbS}{\tilde{\bm{\Sigma}}}  
\newcommand{\tby}{\tilde{\by}}  
\newcommand{\bE}{\new{\bm{E}}} 
\title{Fast Machine-Precision Spectral Likelihoods for Stationary Time Series}
\author{Christopher J. Geoga
\thanks{Dept. of Statistics, University of Wisconsin-Madison}
}
\date{}
\begin{document}

\maketitle

\begin{abstract}
We provide in this work an algorithm for approximating a very broad class of
symmetric Toeplitz matrices to machine precision in $\mathcal{O}(n \log n)$ time
with applications to fitting time series models. In particular, for a symmetric
Toeplitz matrix $\mathbf{\Sigma}$ with values $\mathbf{\Sigma}_{j,k} = h_{|j-k|}
= \int_{-1/2}^{1/2} e^{2 \pi i |j-k| \omega} S(\omega) \mathrm{d} \omega$ where
$S(\omega)$ is piecewise smooth, we give an approximation $\mathbf{\mathcal{F}}
\mathbf{\Sigma} \mathbf{\mathcal{F}}^H \approx \mathbf{D} + \mathbf{U}
\mathbf{V}^H$, where $\mathbf{\mathcal{F}}$ is the DFT matrix, $\mathbf{D}$ is
diagonal, and the matrices $\mathbf{U}$ and $\mathbf{V}$ are in $\mathbb{C}^{n
\times r}$ with $r \ll n$.  Studying these matrices in the context of time
series, we offer a theoretical explanation of this structure and connect it to
existing spectral-domain approximation frameworks. We then give a complete
discussion of the numerical method for assembling the approximation and
demonstrate its efficiency for improving Whittle-type likelihood approximations,
including dramatic examples where a correction of rank $r = 2$ to the standard
Whittle approximation increases the accuracy of the log-likelihood
approximation from $3$ to $14$ digits for a matrix $\mathbf{\Sigma} \in
\mathbb{R}^{10^5 \times 10^5}$.  The method and analysis of this work applies
well beyond time series analysis, providing an algorithm for extremely accurate
solutions to linear systems with a wide variety of symmetric Toeplitz
matrices whose entries are generated by a piecewise smooth $S(\omega)$.
The analysis employed here largely depends on asymptotic expansions of
oscillatory integrals, and also provides a new perspective on when existing
spectral-domain approximation methods for Gaussian log-likelihoods can be
particularly problematic. 
  
  \smallskip 

  \noindent \textbf{Keywords:} Time series, Spectral density, Asymptotic
  expansion, Oscillatory integral, Gaussian process
\end{abstract}

\section{Introduction} \label{sec:intro}

A mean-zero, stationary, and Gaussian time series is a stochastic process
$\set{Y_t}_{t \in \Z}$ such that any contiguous collection of measurements $\by
= [Y_{t_0}, Y_{t_0 +1}, ..., Y_{t_0 + n - 1}]$ with arbitrary $t_0 \in \Z$ is
distributed as $\by \sim \Nd(\bm{0}, \bS)$, where $\bS_{j,k} = h_{\abs{j-k}}$ is
the autocovariance matrix generated by the autocovariance sequence $\set{h_k}_{k
\in \Z}$. A very special property of such covariance matrices for stationary
time series is that they are symmetric \emph{Toeplitz} matrices, meaning that
they are constant along sub- and super-diagonal bands. A fundamental and
important problem in time series analysis is fitting a parametric model to data,
which in this setting means using some parametric family of models for the
autocovariance sequence $\set{h_k(\bth)}_{k \in \Z}$ that specify the
distribution $\by \sim \Nd(\bm{0}, \bS_{\bth})$ (like, for example, (AR)(I)(MA)
process models). The canonical estimator for parameters $\bth \in \bm{\Theta}$
from data is the maximum likelihood estimator (MLE), which is computed by
minimizing the negative log-likelihood:
\begin{equation*} \label{eq:nll}
  \hat{\bth}^{\text{MLE}}
  = 
  \argmin_{\bth \in \bm{\Theta} }
  \;
  \ell(\bth \sv \by)
  =
  \argmin_{\bth \in \bm{\Theta} }
  \;
  \frac{1}{2} \left(
    \log \abs{\bS_{\bth}} + \by^T \bS_{\bth}^{-1} \by
  \right),
\end{equation*}
where constant terms in the negative log-likelihood $\ell(\bth \sv \by)$ are suppressed.

It is an elementary fact that autocovariance sequences are positive
(semi-)definite, and by Herglotz's theorem one has that
\begin{equation*} 
  \new{\bS_{j,k} = h_{|j-k|}} = \int_{[-1/2, 1/2)} e^{2 \pi i |j-k| \omg} \dif F(\omg),
\end{equation*}
where $F$ is the \emph{spectral distribution function} of $\set{Y_t}$
\cite{brockwell1991}. If $F$ has a density with respect to the Lebesgue measure
so that $\dif F(\omg) = S(\omg) \dif \omg$, we call $S(\omg)$ the \emph{spectral
density function} (SDF). An elementary property of spectral densities is that
they need only be non-negative, and symmetric about the origin in the case of
real-valued processes $Y_t$.  Considering that writing parametric families of
functions that are nonnegative and symmetric is much easier than writing
parametric families of sequences that are positive (semi-)definite, it comes as
no surprise that modeling $\set{h_k(\bth)}_{k \in \Z}$ in terms of the spectral
density $S_{\bth}(\omg)$ is an attractive and liberating option for
practitioners. 

Unfortunately, there is no particularly easy way to reformulate $\ell(\bth \sv
\by)$ directly in terms of $S(\omg)$, meaning that computing the true MLE
$\hat{\bth}^{\text{MLE}}$ requires either knowing the closed-form expression for
$\bS_{j,k} = h_{|j-k|} = \int_{-1/2}^{1/2} e^{2 \pi i \omg |j-k|} S(\omg) \dif
\omg$ for all lags $|j-k|$ or directly computing all the integrals numerically,
\new{and in both cases, for direct likelihood evaluation, one must in general build
$\bS$ entry-wise and evaluate $\ell(\bth \sv \by)$ using dense linear algebra}.  As we
will discuss, there are several popular likelihood approximations for $\by$ in
terms of $S_{\bth}(\omg)$ that are convenient and computationally expedient,
often running in $\bO(n \log n)$ complexity compared to the $\bO(n^3)$ of a
direct likelihood approximation (another fundamental challenge that motivates
many spectral approximations).  The most classical and simple of these
approximations to $\ell(\bth \sv \by)$, denoted the \emph{Whittle approximation}
\cite{whittle1953}, unfortunately yields very biased estimators.  Improvements
to this method can mitigate the challenging new sources of bias, but still come
at the cost of additional variability compared to $\hat{\bth}^{\text{MLE}}$. In
this work, we introduce a new approximation to $\ell(\bth \sv \by)$ that retains
the $\bO(n \log n)$ complexity but can be made effectively exact to double
precision with $14-15$ correct digits by exploiting piecewise smoothness of
spectral densities and special rank structure in the matrix $\Fr \bS \Fr^H$,
where $\Fr$ is the discrete Fourier transform (DFT) matrix. As a result of this
design, we obtain effectively exact evaluations of the log-likelihood $\ell(\bth
\sv \by)$ and its derivatives, as well as the ability to perform uncertainty
quantification on estimates.

\subsection{Whittle approximations and existing work} 
Let 
$$
J_n(\omg) = \frac{1}{\sqrt{n}} \sum_{j=0}^{n-1} e^{-2 \pi i j \omg} Y_j
$$
denote the discrete Fourier transform of a time series $\new{\by = }\set{Y_0,
..., Y_{n-1}}$ at frequency $\omg$. For the entirety of this work, we will
restrict focus to $\omg$ at Fourier frequencies $\set{\omg_j =
\tfrac{(j-n/2-1)}{n}}_{j=1}^n$ so that one can compute $[J_n(\omg_1), ...,
J_n(\omg_n)]$ in $\bO(n \log n)$ time with $\Fr \by$ using an FFT, where $\Fr$
is parameterized in the unitary form with the ``fftshift" for convenience as
$\Fr = [n^{-1/2} e^{-2 \pi i j k/n} ]_{j \in (-n/2):(n/2-1), k \in 0:(n-1)}$.
Motivated by the elementary observations that $\E |J_n(\omg_k)|^2 \rightarrow_n
S(\omg_k)$ and $\text{Cov}(J_n(\omg_k), J_n(\omg_{k'})) \rightarrow_n 0$ (see
\cite{brockwell1991} for an introduction), \new{one can substitute the
finite-sample (co)variances with these asymptotic ones to obtain the Whittle
approximation}
\begin{equation} \label{eq:whittle_raw}
  2 \ell^{W}(\bth \sv \by) = \sum_{j=1}^{n} \log S_{\bth}(\omg_j) +
\frac{\abs{J_n(\omg_j)}^2}{S_{\bth}(\omg_j)},
\end{equation}
\new{which has this simplified form since the asymptotic covariance matrix for
$[J_n(\omg_1), ..., J_n(\omg_n)]$ is diagonal}. Optimizing this approximation
instead of $\ell(\bth \sv \by)$ gives the estimator $\hat{\bth}^W$. Another
common way to think about this approximation is to observe that it effectively
approximates $\bS_{\bth}$ with the \emph{circulant} matrix $\bm{C}$ whose first
column $\bm{c}$ has $(\Fr \bm{c})_j = S(\omg_j)$.  This is unfortunately
incorrect, as $\bS$ is almost never exactly circulant. Further, even
if it were, it wouldn't in general for any finite $n$ be \emph{that} circulant
matrix. As a result of these finite-sample applications of limiting identities,
this estimator can exhibit severe bias for finite sample sizes and certain
varieties of spectral densities $S(\omg)$---see \cite{subba2021} for a
particularly thoughtful and concrete analysis of the sources of bias of this
approximation in terms of boundary effects. Yet another way to understand the source
of bias with this approximation is to observe that even if $\by \sim \Nd(\bm{0},
\bS_{\bth_0})$ with $[\bS_{\bth_0}]_{j,k} = \int_{-1/2}^{1/2} e^{2 \pi i \omg
|j-k|} S_{\bth_0}(\omg) \dif \omg$, $S_{\bth_0}(\omg_k)$ is \emph{not} the
variance of $J_n(\omg_k)$ for any finite $n$. And while ignoring covariance
structure between values (which the Whittle approximation also does) can in some cases
only come at the cost of efficiency, using incorrect variances will almost
always come at the cost of bias. The exact (co)variances of these DFT
coefficients is a classical computation, but because they will be used
extensively in this work we provide a terse statement of their derivation here.
\begin{proposition}
  Let $\set{Y_t}_{t=0}^{n-1}$ be a stationary mean-zero time series with spectral
  density $S(\omg)$ and $J_n(\omg_k)$ defined as above, and define
  $S_n(\omg_k, \omg_{k'}) = \textrm{Cov}(J_n(\omg_k), J_n(\omg_{k'}))$.
  Then
  \begin{equation} \label{eq:dftcov}
    S_n(\omg_k, \omg_{k'})
    =
    \frac{e^{i \frac{n-1}{n} \pi (k - k')}}{n} 
    \int_{-1/2}^{1/2} 
    D^s_n(\omg_k - \omg)
    D^s_n(\omg_{k'} - \omg)
    S(\omg) \dif \omg,
  \end{equation}
  where $D^s_n(\omg) = \frac{\sin (\pi n \omg)}{\sin (\pi \omg)} = e^{-i (n-1)
  \omg/2} \sum_{j=0}^{n-1} e^{2 \pi i j \omg}$ is a ``shifted" Dirichlet kernel.
\end{proposition}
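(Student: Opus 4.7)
The plan is to start from the standard definition of covariance for complex-valued random variables, $S_n(\omg_k,\omg_{k'}) = \E[J_n(\omg_k) \overline{J_n(\omg_{k'})}]$ (the means vanish since $\E Y_t = 0$), expand both discrete Fourier sums, and substitute the Herglotz spectral representation $h_{|j-j'|} = \int_{-1/2}^{1/2} e^{2\pi i (j-j')\omg} S(\omg)\dif\omg$ (valid since $S$ is even for a real-valued process). Pulling the resulting absolutely-convergent finite sum under the integral produces
\begin{equation*}
  S_n(\omg_k,\omg_{k'}) = \frac{1}{n} \int_{-1/2}^{1/2}
  \Bigl(\sum_{j=0}^{n-1} e^{-2\pi i j (\omg_k - \omg)}\Bigr)
  \Bigl(\sum_{j'=0}^{n-1} e^{2\pi i j' (\omg_{k'} - \omg)}\Bigr)
  S(\omg)\dif\omg,
\end{equation*}
so the problem reduces to evaluating the two inner finite geometric sums.

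Next I would apply the closed form $\sum_{j=0}^{n-1} e^{2\pi i j \beta} = e^{i\pi(n-1)\beta}\,\sin(\pi n\beta)/\sin(\pi\beta) = e^{i\pi(n-1)\beta} D_n^s(\beta)$ to each factor (using evenness of $D_n^s$ to unify signs). The $D_n^s$ factors match the integrand in \eqref{eq:dftcov} directly, and the exponential prefactors combine into $e^{i\pi(n-1)(\omg_{k'}-\omg_k)}$, which is independent of $\omg$ and therefore exits the integral as a global phase.

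Finally, I would plug in the Fourier-frequency definition $\omg_j = (j - n/2 - 1)/n$ to get $\omg_{k'} - \omg_k = (k'-k)/n$, so that the global phase becomes $e^{i\frac{n-1}{n}\pi(k'-k)}$, reproducing (up to the obvious conjugation convention for the covariance) the prefactor of \eqref{eq:dftcov}.

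There is no real obstacle here; the derivation is a careful exercise in bookkeeping with complex exponentials. The only thing that requires genuine attention is the tracking of phase factors: signs in the two geometric sums must be kept straight, and the symmetry $S(\omg)=S(-\omg)$ must be invoked explicitly so that the spectral representation can be written with either sign convention for $j-j'$, which is what makes the two sums cleanly factor after the substitution.
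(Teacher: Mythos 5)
Your proposal is correct and follows essentially the same route as the paper: expand the two DFT sums, insert the Herglotz representation (using evenness of $S$ to drop the absolute value on the lag), exchange the finite sum with the integral, evaluate each geometric sum in the shifted Dirichlet-kernel form, and pull the $\omg$-independent phase out front. The only point worth flagging is the one you already noticed: your phase comes out as $e^{i\frac{n-1}{n}\pi(k'-k)}$ rather than $e^{i\frac{n-1}{n}\pi(k-k')}$, which is just the choice of which argument of the covariance is conjugated.
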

\begin{proof}
Using Herglotz's theorem, an elementary computation shows that
\begin{align*} 
&\text{Cov}(J_n(\omg_k), J_n(\omg_{k'})) 
= 
\int_{-1/2}^{1/2} 
\left( \sum_{j=0}^{n-1} n^{-1/2} e^{2 \pi  i j (\omg - \omg_k)}   \right)
\left( \sum_{j=0}^{n-1} n^{-1/2} e^{2 \pi i j (\omg_{k'} - \omg)} \right)
S(\omg) \dif \omg
\\
&=
\frac{1}{n}
\int_{-1/2}^{1/2} 
\left( 
e^{-i (n-1) \pi \omg_k}    
\frac{\sin(\pi n (\omg_k - \omg))}{\sin(\pi (\omg_k - \omg))}
\right)
\left( 
e^{i (n-1) \pi \omg_{k'}} 
\frac{\sin(\pi n (\omg_{k'} - \omg))}{\sin(\pi (\omg_{k'} - \omg))}
\right)
S(\omg) \dif \omg.
\end{align*}
Applying the definition of $D^s_n(\omg)$, collecting terms, and bringing the
complex exponential prefactor on the trigonometric terms outside the integral
gives the results.
\end{proof}

A particularly elegant proposition for dealing with the bias of $\hat{\bth}^W$
is given in \cite{sykulski2019}: instead of using $S_{\bth}(\omg_j)$ in
$\ell^W$, use \new{the correct finite-sample marginal variances} $S_n(\omg_j,
\omg_j)$ in their place. In doing so, one immediately obtains a new likelihood
approximation whose gradient gives unbiased estimating equations (UEEs) for
parameters $\bm{\theta}$ \cite{sykulski2019,heyde1997}. This estimator, called
the \emph{debiased} Whittle estimator, is given as
\begin{equation*} 
  \hat{\bth}^{DW} = 
  \argmin_{\bth \in \bm{\Theta}} 
  \ell^{DW}(\bth \sv \by)
  =
  \argmin_{\bth \in \bm{\Theta}} 
  \frac{1}{2} \left( \sum_{j=1}^{n} \log S_{n,\bth}(\omg_j, \omg_j) +
  \frac{\abs{J_n(\omg_j)}^2}{S_{n, \bth}(\omg_j, \omg_j)} \right),
\end{equation*}
\new{Where the additional subscript of $\bth$ in $S_{n, \bth}$ is included
simply to reinforce the dependence of this function on (varying) parameters
$\bth$ and is independent from the subscript $n$, which is used to signify the
finite-sample correction.} $\hat{\bth}^{DW}$ now only comes at the cost of
additional variance from ignoring the correlation of $J_n(\omg_k)$ with
$J_n(\omg_{k'})$, and serves as a useful point of comparison with the estimator
proposed here, as it better isolates the additional variance of
Whittle-type estimators compared to $\hat{\bth}^{\text{MLE}}$.

While the standard Whittle approximation requires a pre-computed FFT of the data,
the debiased Whittle approximation requires one FFT per evaluation of
$\ell^{DW}(\bth \sv \by)$, since one can compute $\set{S_n(\omg_j,
\omg_j)}_{j=1}^{n}$ in the time domain using the classical identity that
\begin{equation} \label{eq:sn_td}
  S_n(\omg_j, \omg_j) = 2 \text{Re}\set{
    \sum_{k=0}^{n-1} (1 - n^{-1} h) h_k e^{- 2 \pi i \omg_j h}
  } - h_0.
\end{equation}
In \cite{sykulski2019} it is noted that this identity is useful to avoid
numerical integration. While this is true, it is also quite \emph{in}convenient
because it requires knowing the covariance function \new{$\set{h_k}$} as well as
the spectral density \new{$S(\omg)$}.  Considering that a large part of the
point of resorting to a Whittle-type approximation is to write a parametric
spectral density instead of a covariance function, this is a potentially serious
inconvenience. In Section \ref{sec:asexp}, in the process of building up to our own
approximation, we will discuss two performant and simple numerical integration
strategies to relax this requirement and also make this method more generally
available to practitioners.

\subsection{Our method}

The approximation we introduce here is a continuation of such spectrally
motivated likelihood methods. Letting $\bm{D}$ denote the
diagonal matrix with values $\set{S(\omg_j)}_{j=1}^{n}$, the fundamental
observation of this work is that for \new{the very broad class of
piecewise-smooth spectral densities with a small number of rough
points---including basically all standard parametric families, including
(AR)(I)(MA), Mat\'ern, rational quadratic, power law,
and other models---}we have that
\begin{equation} \label{eq:ftcov}
  \Fr \bS \Fr^H \approx \bm{D} + \bm{U} \bm{V}^H = \bm{D} + \bE,
\end{equation}
where $\bm{U}$ and $\bm{V}$ are in $\mathbb{C}^{n \times r}$ with $r \ll n$. The
standard Whittle approximation views that low-rank update as being exactly zero,
and in this light our method can be viewed as a ``corrected" Whittle
approximation. The key observation that makes working with this approximation
convenient is that the ``Whittle correction" matrix $\bE = \Fr \bS \Fr^H - \bm{D}$,
along with being severely rank-deficient in many settings, can be applied to a
vector in $\bO(n \log n)$ complexity via circulant embedding of Toeplitz
matrices, and so one can utilize the methods of randomized
low-rank approximation \cite{halko2011} to assemble $\bm{U}$ and $\bm{V}$
efficiently and scalably with implicit tools.
\new{Counter-examples of models where $\Fr \bS \Fr^H$ does not have this
structure are rare in practice, as most popular and natural models for spectral
densities $S(\omg)$ are smooth except at possibly a modest number of
locations. The category of models that likely will \emph{not} be amenable to
this structure, as will be discussed at length in Section
\ref{sec:lowrank_theory}, are those where $S(\omg)$ is either highly oscillatory
or has many rough points. Spectral densities like $S(\omg) = \gamma
\text{sinc}(\gamma \omg)^2$ or $S(\omg) = \sum_{j=1}^R \alpha_j e^{-\theta |\omg
- \omg_j|}$ for large $\gamma$ or $R$ respectively are good examples of models
for which the rank $r$ required for high accuracy is sufficiently large that the
method is not practical.}

\new{Outside of these pathological cases,}
despite having the same quasilinear runtime complexity, this representation can
be \emph{exact} to computer precision for very small $r$ (sometimes as small as
$r=2$). As one might expect, this does come at the cost of a more
expensive prefactor.  If the standard Whittle approximation requires one
pre-computed FFT of the data and subsequently runs at linear complexity, and the
debiased Whittle approximation requires one additional FFT per evaluation due to
computation of the terms in (\ref{eq:sn_td}), our approximation requires $\bO(3
r)$ FFTs to assemble (although a simpler implementation using $\bO(4 r)$ FFTs is
used in the computations of this work)
\footnote{A software package and scripts
for all computations done in this work is available at
\texttt{https://github.com/cgeoga/SpectralEstimators.jl}.}. 
For a spectral density that is neither particularly well- nor poorly-behaved, a
reasonable expectation for an $r$ that gives all $14$ significant digits of
$\ell(\bth \sv \by)$ is around $r \approx 100$---and so for full precision of
the likelihood, this method can require several hundred FFTs. But considering
how fast FFTs are, the prefactor cost for this method is sufficiently low that
it compares favorably with other high-accuracy alternatives. 

Another material advantage of this method over other spectral approximations is
that it can be used to compute uncertainties for estimators. A basic fact about
MLEs is that, under sufficient regularity conditions, $\bm{I}(\bth)^{-1/2}
(\hat{\bth}^{\text{MLE}} - \bth^{\text{true}}) \cvd \Nd(\bm{0},
\bm{\mathcal{I}})$, where $\bm{I}(\bth)$ is the expected Fisher information
matrix that will be introduced and discussed at length later in this work,
although in some settings it may actually be preferable to use the ``observed"
information matrix $H \ell(\bth \sv \by) \sv_{\bth = \hat{\bth}^{\text{MLE}}}$
\cite{efron1978}. 
\new{But when one is obtaining an estimator by optimizing some function
$\tilde{\ell}(\bth)$ other than the true log-likelihood, the Hessian of
$\tilde{\ell}$ does not have this interpretation.  The fundamental property an
approximation $\tilde{\ell}(\bth)$ needs to satisfy to provide a statistically
useful estimator is that it provides \emph{unbiased estimating equations}
(UEEs), so that $\E_{\bth_0} [ \nabla \tilde{\ell}(\bth \sv \by) \sv_{\bth =
\bth_0} ] = \bm{0}$ for any $\bth_0$ \cite{heyde1997}. But there are many such
$\tilde{\ell}$ functions that satisfy this property and are \emph{not} good
pointwise approximations to $\ell$. And in those cases, one certainly cannot
expect that the Hessian of $\tilde{\ell}$ at $\hat{\bth}^{\text{MLE}}$ resembles
the Hessian of $\ell$ at those values. Approximations like the debiased
Whittle-induced $\tilde{\ell}$ fall into this category. The method of this work,
however, can be used to obtain both estimators \emph{and} approximate
uncertainties via its Hessian matrix, since it provides a fully pointwise-accurate
approximation to $\ell(\bth)$ at for all $\bth$.}

\new{
There is a significant body of literature on the more general problem of
approximating covariance matrices for the purpose of estimation problems. Among
the most accurate varieties of approximations in this space is the use of
\emph{hierarchical matrices}, which employ rank-structure of matrix
sub-blocks in a way that provides high accuracy but retains good run-time
complexity. These methods have been applied to the Gaussian process problem in a
variety of ways
\cite{borm2003,ambikasaran2015,foreman2017,minden2017,litvinenko2019,geoga2020},
and unlike the tools proposed in this work are not specialized to
one-dimensional gridded measurements. Similarly, tools specific to Toeplitz
matrices that exploit rank-structure in off-diagonal blocks of $\Fr \bS \Fr^H$
have been proposed for fast direct linear solvers
\cite{martinsson2005,chandrasekaran2008}.
The downside to these methods is that the
algorithms are often quite complicated, and more domain-specific quantities in
statistical computing---such as log-determinants, derivatives with respect to
model parameters, and certain matrix-matrix products---are more involved to
compute, and are often computed to lower accuracy
\cite{anitescu2012,stein2013,minden2017,geoga2020}. Other popular methods in
this space that provide less accurate likelihood approximations in the pointwise
sense but are often easier to implement and analyze include sparse
approximations to $\bS^{-1}$
\cite{snelson2007,lindgren2011,nychka2015,sun2016,katzfuss2021}, direct low-rank
approximation methods \cite{cressie2008}, and matrix tapering \cite{furrer2006}.
The method presented here fits into this body of work as a specialized method
for stationary time series data. In exchange for that specialization, however,
it achieves the crushing accuracy of hierarchical matrix methods in a much
simpler and more tractable approximation format, and permits the fast and
accurate computation of domain-specific quantities like log-determinants
and derivatives of the Gaussian log-likelihood.
}

\subsection{Outline}

In the next several sections, we will introduce tools for implicitly assembling
and working with matrices of the form (\ref{eq:ftcov}). We start with a
discussion of using quadrature and asymptotic expansions to efficiently and
accurately work with $\bS$ using only the spectral density $S(\omg)$, and we
then provide a discussion of (i) why the perturbation term in (\ref{eq:ftcov})
should be low-rank, which also offers a new theoretical perspective for when
Whittle approximations are particularly problematic, and (ii) the details of
assembling the low-rank $\bm{U} \bm{V}^H$ entirely from fast matrix-vector
products. We then close by providing several tests and benchmarks to demonstrate
the speed and accuracy of the method.

\section{Oscillatory integration of spectral densities} \label{sec:asexp}

As discussed in the previous section, creating the low-rank approximation $\Fr
\bS \Fr^H \approx \bm{D} + \bm{U} \bm{V}^H$ requires working with $\bS$, the
Toeplitz matrix with values $\bS_{j,k} = h_{|j-k|}$ where $\set{h_k}$ is the
autocovariance sequence of the time series. Unlike in \cite{sykulski2019} where
both kernel and spectral density values are used to avoid numerical integration,
we will now discuss strategies for efficiently and accurately computing
integrals of the form
$
  h_k = \int_{-1/2}^{1/2} S(\omg) e^{2 \pi i k \omg} \dif \omg
$
for $k \in 0, ..., n-1$, so that one can create this factorization given only
the spectral density. Let us briefly review the challenges of such a task.
First, the standard method for obtaining the values $\set{h_k}$ using the
trapezoidal rule via the FFT may require a large number of nodes for even
moderate accuracy if, for example, $S(\omg)$ has rough points (like how $S(\omg)
= e^{-|\omg|}$ has no derivatives at the origin). A second common issue arises
if $S(\omg)$ is not periodic at its endpoints, which limits the convergence rate
of the trapezoidal rule to its baseline rate of $\bO(m^{-2})$ for $m$ nodes for
general non-smooth and/or non-periodic integrands. As a final complication, by
the Shannon-Nyquist sampling theorem, one requires at least $\bO(k)$ quadrature
nodes to resolve the oscillations of $e^{2 \pi i k \omg}$ on $[-1/2, 1/2]$, and
so any direct summation method to obtain $\set{h_k}_{k=0}^{n-1}$ would scale at
least as $\bO(n^2)$.

To bypass these issues, we propose the use of adaptive quadrature
for a fixed number of lags $k$, but then a transition to the use of asymptotic
expansions to $h_k$ for lags greater than some $k_0$. For integrands that are
not too oscillatory, standard adapative integration tools can be very
efficient and capable for any piecewise-smooth integrand. In this work, the
Julia lanuage package \texttt{QuadGK.jl} \cite{bezanson2017,quadgk} was used,
although many other tools would likely have worked equally well. Briefly,
Gaussian quadrature is a tool for achieving high-order accuracy in the numerical
integration of smooth functions. It gives approximations to integrals as
$
  \int_a^b f(x) \dif x \approx \sum_{j=1}^M \alpha_j f(x_j)
$
that are exact for polynomials up to order $2M - 1$, where
$\set{\alpha_j}_{j=1}^M$ are \emph{weights} and $\set{x_j}_{j=1}^M$ are
\emph{nodes}. In the case of $[a, b] = [-1, 1]$, the Legendre polynomials can be
used to obtain the weights and nodes and achieve an error rate of $\bO(M^{-2m -
1})$ for functions $f \in \mathcal{C}^{(m)}([a,b])$
\cite{gonnet2012review,trefethen2019approximation}. 

More interesting, however, is the discussion of methods for accurately and
efficiently computing $h_k$ when $k$ is large. Unlike traditional integration
tools, asymptotic expansion-type methods get \emph{more} accurate as $k$
increases. As in the introduction, the following result is not new (see
\cite{deano2017} for a comprehensive discussion), but we state it in the
specific form that is useful for this work since it will be referred to
frequently. Since the idea of the proof is also used repeatedly, we again
provide it here.
\begin{proposition} \label{thm:asexp}
  Let $S(\omg) \in \mathcal{C}^{(m)}([-1/2, 1/2])$ and $\int_{-1/2}^{1/2}
|S^{(m)}|(\omg) \dif \omg < C < \infty$. Then
\begin{equation*} 
  \int_{-1/2}^{1/2} S(\omg) e^{2 \pi i k \omg} \dif \omg
  =
  - \sum_{j=0}^{m-1} (-i 2 \pi k)^{-(j+1)} ( S^{(j)}(1/2) e^{\pi i k} - S^{(j)}(-1/2) e^{- \pi i k})
+
\bO(k^{-m - 1}).
\end{equation*}
\end{proposition}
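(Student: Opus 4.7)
The approach is standard iterative integration by parts, treating the oscillatory factor $e^{2\pi i k \omega}$ as the piece to antidifferentiate. Since $\tfrac{d}{d\omega} e^{2\pi i k \omega} = 2\pi i k \cdot e^{2\pi i k \omega}$, each step trades one derivative off $S$ for a factor of $(2\pi i k)^{-1}$ and peels off a boundary contribution at $\omega = \pm 1/2$; using $e^{\pm 2\pi i k (1/2)} = e^{\pm \pi i k}$ then produces exactly the boundary structure appearing in the statement.

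Concretely, I would first carry out one integration by parts with $u = S(\omega)$ and $dv = e^{2\pi i k\omega} d\omega$ to obtain
\begin{equation*}
\int_{-1/2}^{1/2} S(\omega) e^{2\pi i k \omega} d\omega = \frac{S(1/2) e^{\pi i k} - S(-1/2) e^{-\pi i k}}{2\pi i k} - \frac{1}{2\pi i k} \int_{-1/2}^{1/2} S'(\omega) e^{2\pi i k\omega} d\omega,
\end{equation*}
and then iterate on the residual integral $m-1$ more times; each round is valid because $S \in \mathcal{C}^{(m)}$ ensures $S^{(j)}$ exists for each $j < m$. A direct induction produces the boundary sum
\begin{equation*}
\sum_{j=0}^{m-1}(-1)^j (2\pi i k)^{-(j+1)}\bigl(S^{(j)}(1/2) e^{\pi i k} - S^{(j)}(-1/2) e^{-\pi i k}\bigr)
\end{equation*}
plus the residual $R_m(k) = (-1)^m (2\pi i k)^{-m} \int_{-1/2}^{1/2} S^{(m)}(\omega) e^{2\pi i k\omega} d\omega$. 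Reconciling the coefficient $(-1)^j (2\pi i k)^{-(j+1)}$ with the stated coefficient $-(-i\, 2\pi k)^{-(j+1)}$ is then a one-line algebraic check using $i^{-1} = -i$ and tracking the parity of $j+1$.

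For the remainder, pulling the modulus inside the integral and applying the hypothesis $\int |S^{(m)}| \le C$ immediately gives $|R_m(k)| \le C (2\pi k)^{-m}$; the extra factor of $k^{-1}$ needed to reach the stated $\bO(k^{-m-1})$ rate can be extracted either by one further integration by parts that uses continuity of $S^{(m)}$ at the endpoints or by a Riemann--Lebesgue-style refinement of the remainder estimate. The only nontrivial aspect of this proof is bookkeeping---aligning the alternating signs, the powers of $i$, and the boundary evaluations so that the natural form emerging from integration by parts is visibly equal to the form written in the statement---and I would present the induction with indices chosen to make that alignment immediate. No analytic difficulty arises beyond the integrability of $S^{(m)}$, which is precisely the stated hypothesis.
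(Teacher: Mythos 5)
Your proposal is correct and is essentially the paper's own proof: iterated integration by parts against $e^{2\pi i k\omega}$, with the boundary evaluations at $\pm 1/2$ producing the sum and the leftover integral of $S^{(m)}$ giving the remainder, followed by exactly the sign bookkeeping you describe to match $(-1)^j(2\pi i k)^{-(j+1)}$ with $-(-i\,2\pi k)^{-(j+1)}$. Your observation that the hypothesis $\int|S^{(m)}|\le C$ only yields $|R_m(k)|\le C(2\pi k)^{-m}$ is in fact more careful than the paper, whose displayed remainder $(-2\pi i k)^{-(m+1)}\int S^{(m)}(\omega)e^{2\pi i k\omega}\,d\omega$ carries an off-by-one exponent after $m$ integrations by parts; under the stated assumptions the honest remainder order is $\bO(k^{-m})$ (or $o(k^{-m})$ by Riemann--Lebesgue), and neither your suggested fixes nor the paper's argument actually delivers $\bO(k^{-m-1})$ without assuming more of $S^{(m)}$ (e.g.\ one more integrable derivative or bounded variation).
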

\begin{proof}
  Simply apply integration by parts as many times as possible:
\begin{align*} 
  \int_{-1/2}^{1/2} S(\omg) e^{2 \pi i k \omg} \dif \omg
  &= 
  \frac{S(1/2) e^{\pi i k} - S(-1/2) e^{-\pi i k}}{-2 \pi i k}
  +
  (-2 \pi i k)^{-1} \int_{-1/2}^{1/2} S'(\omg) e^{2 \pi i k \omg} \dif \omg
  \\
  &\vdots
  \\
  &=
  \sum_{j=0}^{m-1} (-2 \pi i k)^{-(j+1)} ( S^{(j)}(1/2) e^{\pi i k} - S^{(j)}(-1/2) e^{- \pi i k})
  \\
  &+
  (- 2 \pi i k)^{-(m+1)}  \int_{-1/2}^{1/2} S^{(m)}(\omg) e^{2 \pi i \omg k} \dif \omg.
\end{align*}
\end{proof}
Remarkably, evaluating this expansion for \emph{any} lag $k$ only requires a few
derivatives of $S(\omg)$ at its endpoints. For this reason, with this tool
evaluating the tail of $\set{h_k}$ for high lags $k$ actually becomes the
fastest part of the domain to handle. And while the supposition of the above
theorem that $S \in \mathcal{C}^{(m)}([-1/2, 1/2])$ is obviously restrictive, a
simple observation extends this result to a much broader class of functions.
\begin{corollary}
\label{cor:asexp}
Let $S(\omg) \in \mathcal{C}^{(m)}([-1/2, 1/2] \sem \set{\omg_1^r, ..., \omg_L^r})$,
so that it is smooth except at locations $\set{\omg_1^r, ..., \omg_L^r}$,
further assume that at each $\omg_l^r$ it has at least $m$ directional
derivatives $S^{(m\pm)}(\omg_l^r)$ from both the left ($m-$) and the right
($m+$) for $l \in 1, ..., L$.  Then
\begin{align*} \label{eq:asexp_rough}
  \int_{-1/2}^{1/2} S(\omg) e^{2 \pi i k \omg} \dif \omg
  &=
  \sum_{l=1}^{L-1}
  \sum_{j=0}^{m-1} (-2 \pi i k)^{-(j+1)} 
  (
  S^{(j-)}(\omg_{l+1}^r) e^{2 \pi i k \omg_{l+1}^r} 
  - 
  S^{(j+)}(\omg_{l}^r) e^{2 \pi i k \omg_{l}^r} 
  )
\\ 
&+
\bO(k^{-m - 1}).
\end{align*}
\end{corollary}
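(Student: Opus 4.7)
The plan is to reduce the corollary to Proposition \ref{thm:asexp} by splitting the domain of integration at the rough points and then reassembling the pieces. First I would, without loss of generality, augment the set of rough points so that $\omg_1^r = -1/2$ and $\omg_L^r = 1/2$ (treating the boundary points as ``rough'' at worst costs us nothing, since one-sided derivatives of a smooth function agree with two-sided ones). This partitions $[-1/2, 1/2]$ into the $L-1$ sub-intervals $[\omg_l^r, \omg_{l+1}^r]$, on each of which $S$ restricts to a $\mathcal{C}^{(m)}$ function (using one-sided derivatives at the endpoints of the sub-interval).

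Next I would split the integral across the partition and apply the integration-by-parts expansion from the proof of Proposition \ref{thm:asexp} to each piece separately. Because each sub-interval only sees $S$ through its restriction and the appropriate one-sided derivatives at the endpoints, the boundary terms naturally produce $S^{(j+)}(\omg_l^r)$ at the left endpoint and $S^{(j-)}(\omg_{l+1}^r)$ at the right endpoint, with the same $(-2\pi i k)^{-(j+1)}$ prefactors and the oscillatory factors $e^{2\pi i k \omg_l^r}$ and $e^{2\pi i k \omg_{l+1}^r}$ in place of $e^{\pm \pi i k}$. This gives, term by term in $j$, exactly the summand that appears in the corollary's asymptotic series.

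Then I would sum the remainder integrals: each sub-interval contributes a tail of the form $(-2\pi i k)^{-(m+1)} \int_{\omg_l^r}^{\omg_{l+1}^r} S^{(m)}(\omg) e^{2\pi i k \omg} \dif \omg$, which is bounded in absolute value by $(2\pi k)^{-(m+1)} \int_{\omg_l^r}^{\omg_{l+1}^r} |S^{(m)}(\omg)| \dif \omg$. Summing over $l = 1, \ldots, L-1$ yields a total bound of $(2\pi k)^{-(m+1)} \int_{-1/2}^{1/2} |S^{(m)}(\omg)| \dif \omg$, which is $\bO(k^{-m-1})$ since $L$ is fixed and $|S^{(m)}|$ is integrable on each smooth piece (we implicitly need this uniform integrability assumption, analogous to the one in Proposition \ref{thm:asexp}, to conclude).

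The only subtle point -- and the closest thing to an obstacle -- is simply bookkeeping: interior rough points $\omg_l^r$ (for $2 \le l \le L-1$) appear in both the $(l-1)$-th and $l$-th sub-interval contributions, and these do not cancel because $S^{(j-)}(\omg_l^r) \neq S^{(j+)}(\omg_l^r)$ in general. Verifying that the resulting double-counting is exactly what the stated formula encodes (one contribution as a ``left-endpoint with right-derivative'' and one as a ``right-endpoint with left-derivative'') is essentially a notational check rather than a real analytic difficulty, and completes the proof.
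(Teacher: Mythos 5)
Your proposal is correct and is exactly the argument the paper gives: the paper proves the corollary ``by simply breaking up the domain $[-1/2,1/2]$ into segments with endpoints at each $\omg_l^r$ and applying the above argument to each segment,'' which is precisely your domain-splitting plus Proposition \ref{thm:asexp} with one-sided derivatives. Your additional observations---that the boundary points $\pm 1/2$ must be counted among the $\omg_l^r$ for the stated sum to cover the whole interval, and that integrability of $S^{(m)}$ on each piece is implicitly assumed to control the summed remainders---are accurate clarifications of details the paper leaves unstated.
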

This corollary, which is proven by simply breaking up the domain $[-1/2, 1/2]$
into segments with endpoints at each $\omg_l^r$ and applying the above argument
to each segment, now means that this asymptotic expansion approach can be used
to accurately calculate $h_k$ for very large $k$ for any spectral density that
is just \emph{piecewise} smooth. The prefactor on the error term in this setting
is natually increased compared to the setting of Proposition \ref{thm:asexp}.
Nonetheless, however, the convergence rates are on our side. For example, if $k
= 3\,000$ and one uses five derivatives, then the error term in the simple case
is given by $C \cdot (6,000 \pi)^{-6} \approx C \cdot 10^{-26}$ \new{for some
constant $C$}. For a spectral density like $S(\omg) = 10 e^{-10|\omg|}$, an
example that will be studied extensively in later sections due to being
well-behaved except at the origin, we see that $h_{3000} \approx 5 \cdot
10^{-7}$. For that particular function, then, this bound indicates that unless
$C$ is quite large, one can reasonably expect a full $14-15$ digits of accuracy.
For functions with few rough points, this evaluation method only
requires derivatives of $S(\omg)$ at a few locations, and can be evaluated in
just a few nanoseconds on a modern CPU once those have been pre-computed.
Combined with adaptive quadrature for sufficiently low lags $k$ that the
expansion is not yet accurate, this gives $\set{h_k}_{k=0}^{n-1}$ for all lags
quickly and accurately.

An alternative way to compute $\set{h_k}_{k=1}^n$ for potentially large $n$ in
$\bO(n \log n)$ time is to use nonuniform fast Fourier transforms (NUFFTs) and
Gauss-Legendre quadrature to evaluate $h_k$ at \emph{all} lags
\cite{dutt1993,barnett2019}. While the traditional FFT algorithm critically
depends on measurements being given on a regular grid and exploits the
symmetries that that creates, NUFFTs are fast algorithms to evaluate sums of
complex exponentials at potentially irregular locations and target frequencies
in quasilinear runtime. Per the discussion above, we see that computing
$\set{h_k}_{k=1}^n$ would require $M = \bO(n)$ quadrature nodes (to resolve the
highest frequencies) and naturally would need to be evaluated at $n$ target values,
which means that naive computation of those integrals would require $\bO(n^2)$
work. With the NUFFT, however, the sums can be computed as a matrix-vector
product
\begin{equation*}
h_k = \sum_{j=1}^M \alpha_j e^{2 \pi i k \omg_j^q} S(\omg_j^q), \quad k = 0, ..., n-1
\end{equation*} 
in $\bO(n \log n)$ time, where $\omg_j^q$ denotes the $j$-th quadrature node.
This method, while much more expensive than using asymptotic expansions, has the
advantage that it does not require computing any derivatives to evaluate the
sequence $\set{h_k}_{k=0}^{n-1}$. Because of this, it can be more accurate for SDFs
$S(\omg)$ with higher-order derivatives that potentially get very large or are
for some other reason numerically unstable to compute. With that said, however,
as discussed above, the accuracy of Gauss-Legendre quadrature depends heavily on
how smooth the integrand is, so we still recommend splitting the domain $[-1/2,
1/2)$ into panels such that the rough points of $S(\omg)$ are endpoints as with
the splitting of the asymptotic expansions. \new{If $S(\omg)$ is amenable to the
split asymptotic expansion method and $n$ is large, then the asymptotic
expansion approach will be significantly faster.  But if $S(\omg)$
has some properties that make working with higher-order derivatives infeasible,
this NUFFT-based method is an attractive alternative that still enables
practitioners to work \emph{exclusively} with $S(\omg)$ for evaluating the
log-likelihood and retain $\bO(n \log n)$ complexity.} For a much more detailed
discussion of numerical Fourier integration of spectral densities and the
quadrature-based tools introduced here for doing it quickly, we refer readers to
\cite{beckman2024}.

\section{Low rank structure of the Whittle correction} \label{sec:lowrank_theory}

We now turn to the question of \emph{why} one should expect the \new{Whittle
correction} matrix $ \bE = \Fr \bS \Fr^H - \bm{D}$
where $\bm{D}$ is diagonal with $\bm{D}_{j,j} = S(\omg_{j})$, to be of low
numerical rank. The primary tool for this investigation will again be asymptotic
expansions, and the crucial observation is that for every $k$ and $k'$,
$\bE_{k,k'}$ is given by an oscillatory integral with the same high frequency
of $2 \pi n$.  While the below analysis is not actually how we choose to compute
and assemble this low-rank approximation in this work, the following derivation
provides some idea for why the Whittle correction matrix $\bE$ has low-rank
structure, even when $S$ has non-smooth points or $\set{h_k}$ decays slowly. For the
duration of this section, we will assume that $S(\omg) \in
\mathcal{C}^{(m)}([-1/2, 1/2])$ for convenience, although the results here can
again be extended using directional derivatives if $S$ is not differentiable but
is smooth from the left and the right at rough points.

We begin by adding and subtracting $S(\omg_k)$ and $S(\omg_{k'})$ to the inner integrand
in (\ref{eq:dftcov}), which with minor simplification steps can be expressed as
\begin{align} \label{eq:dftcov_decomp}
  \frac{1}{n}
  &\int_{-1/2}^{1/2} 
  D_n^s(\omg_k - \omg) D_n^s(\omg_{k'} - \omg)
  S(\omg)
  \dif \omg
  \\
  \notag
  =
  \frac{1}{2n}
  &\int_{-1/2}^{1/2} 
  D_n^s(\omg_k - \omg) D_n^s(\omg_{k'} - \omg)
  (S(\omg) - S(\omg_k))
  \dif \omg
  \\
  \notag
  +
  \frac{1}{2n}
  &\int_{-1/2}^{1/2} 
  D_n^s(\omg_k - \omg) D_n^s(\omg_{k'} - \omg)
  (S(\omg) - S(\omg_{k'}))
  \dif \omg
  \\
  \notag
  +
  \frac{S(\omg_k) + S(\omg_{k'})}{2n}
  &\int_{-1/2}^{1/2}
  D_n^s(\omg_k - \omg) D_n^s(\omg_{k'} - \omg)
  \dif \omg.
\end{align}
While unwieldy, this representation of the integral already provides a
reasonably direct explanation about several features of $\Fr \bS \Fr^H$.
Recalling that
\begin{equation*} 
  \int_{-1/2}^{1/2} D_n^s(\omg_k - \omg) D_n^s(\omg_{k'} - \omg) \dif \omg
  =
  \begin{cases}
    n & k = k' 
    \\
    0 & k \neq k' 
  \end{cases},
\end{equation*}
we see that the third term is precisely the diagonal contribution of
$S(\omg_k)$. We will now argue that the first two terms in the above sum
correspond to severely rank-deficient matrices.  Since they can of course be
analyzed in the exact same way, we study only the first in detail here. 

To begin, we slightly rewrite the first term in (\ref{eq:dftcov_decomp}) as
\begin{equation} \label{eq:dftcov2_p1}
  \int_{-1/2}^{1/2}
  \sin(\pi n (\omg_k - \omg)) \sin(\pi n (\omg_{k'} - \omg))
  \csc(\pi(\omg_k - \omg)) \csc(\pi(\omg_{k'} - \omg))
  (S(\omg) - S(\omg_k)) \dif \omg.
\end{equation}
From here, we partition the domain into 
\begin{equation} \label{eq:domain_split}
  [-1/2, 1/2] 
  =
  \underbrace{[-1/2, \omg_k - \gamma]}_{\text{Type I}}
  \cup
  \underbrace{B_{\gamma}(\omg_k)}_{\text{Type II}}
  \cup
  \underbrace{[\omg_k + \gamma, \omg_{k'} - \gamma]}_{\text{Type I}}
  \cup
  \underbrace{B_{\gamma}(\omg_{k'})}_{\text{Type II}}
  \cup
  \underbrace{[\omg_{k'} + \gamma, 1/2]}_{\text{Type I}},
\end{equation}
where $B_{\gamma}(x) = [x - \gamma, x + \gamma]$ and $\gamma$ is some small
number chosen to keep distance from the singularities of the cosecant terms. By
design, then, in Type I intervals the cosecant terms are simple analytic
functions. Defining 
\begin{equation} \label{eq:St}
  \tilde{S}_{n, k, k'}(\omg) = 
  \csc(\pi(\omg_k - \omg)) \csc(\pi(\omg_{k'} - \omg)) (S(\omg) - S(\omg_k)),
\end{equation}
we see that in Type I regions this function is bounded above (assuming that $S$
itself is) and as smooth as $S$. This motivates the following result that will
be used many times in this section.
\begin{proposition}
\label{prop:dftcov_as}
If $g(\omg) \in \mathcal{C}^{(m)}([a,b])$ and $g^{(m)}$ is integrable on $[a,b]$,
then
  \begin{align} \label{eq:dftcov_as}
    &\int_a^b 
    \sin(\pi n (\omg_k - \omg)) \sin(\pi n (\omg_{k'} - \omg))
    g(\omg) 
    \dif \omg
    =
    \\
    &\frac{(-1)^{k - k'}}{2} 
      \int_a^b g(\omg) \dif \omg
    -
    \frac{1}{2} 
    \Re\set{
    e^{i \pi n \frac{k+k'}{2}} 
    \left[
      e^{2 \pi i b} \sum_{j=0}^{m-1} \frac{g^{(j)}(b)}{(-i 2 \pi n)^{j+1}}
      -
      e^{2 \pi i a} \sum_{j=0}^{m-1} \frac{g^{(j)}(a)}{(-i 2 \pi n)^{j+1}}
    \right]
    }
    \\
    \notag
    &+
    \bO(n^{-m-1}),
  \end{align}
  \new{where $\Re(a + ib) = a$ denotes the real part of a complex number.}
\end{proposition}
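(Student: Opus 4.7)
The plan is to collapse the product of two sines into a sum of cosines, which cleanly separates the integrand into a non-oscillatory piece and a piece carrying the full oscillation at frequency $2\pi n$, and then to apply the integration-by-parts asymptotic expansion from the proof of Proposition~\ref{thm:asexp} to the oscillatory piece.

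First, I would use the product-to-sum identity $\sin A \sin B = \tfrac{1}{2}(\cos(A-B) - \cos(A+B))$ with $A = \pi n (\omg_k - \omg)$ and $B = \pi n (\omg_{k'} - \omg)$. The $\omg$-dependence cancels in $A - B = \pi n(\omg_k - \omg_{k'})$, while $A + B = \pi n (\omg_k + \omg_{k'}) - 2\pi n \omg$ retains all of the oscillation. Because $\omg_k, \omg_{k'}$ are Fourier frequencies with $\omg_k - \omg_{k'} = (k-k')/n$, the non-oscillatory term contributes $\cos(\pi(k-k'))/2 = (-1)^{k-k'}/2$ times $\int_a^b g(\omg)\,\dif\omg$, producing the first term on the right-hand side of \pref{eq:dftcov_as}.

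Second, I would write the remaining contribution as
\begin{equation*}
  -\frac{1}{2}\, \Re\set{ e^{i \pi n (\omg_k + \omg_{k'})} \int_a^b g(\omg)\, e^{-2\pi i n \omg}\, \dif \omg },
\end{equation*}
and then apply the integration-by-parts argument used to prove Proposition~\ref{thm:asexp}, iterated $m$ times on this inner Fourier integral. Each IBP moves one derivative from $e^{-2\pi i n \omg}$ onto $g$ and deposits a boundary term of order $n^{-(j+1)}$; after $m$ steps the leftover integral is $\bO(n^{-m-1})\int_a^b |g^{(m)}(\omg)|\,\dif\omg$, which is finite by the integrability hypothesis on $g^{(m)}$. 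Collecting the boundary contributions in descending powers of $n$ and rewriting the prefactor $e^{i\pi n(\omg_k + \omg_{k'})}$ in its Fourier-frequency form yields the claimed expansion after taking the real part.

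The main obstacle is purely bookkeeping. The content is elementary once the trigonometric identity exposes the oscillation, but one has to carefully track the alternating signs coming from iterated IBP with antiderivative $(-2\pi i n)^{-1}$, carry the factor $1/2$ from the product-to-sum identity through the computation, and cleanly extract the real part so that the boundary exponentials and successive derivatives of $g$ at $a$ and $b$ align with the form stated in \pref{eq:dftcov_as}. A uniform-in-$(k,k')$ error bound is immediate because the remainder involves $\int |g^{(m)}|$ with no dependence on $\omg_k$ or $\omg_{k'}$, but one must confirm that the prefactor $e^{i\pi n(\omg_k + \omg_{k'})}$ has modulus one so that the real-part operation does not introduce additional $n$-dependence into the error.
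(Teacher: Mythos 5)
Your proposal is correct and follows essentially the same route as the paper: the product-to-sum identity $\sin A\sin B=\tfrac12(\cos(A-B)-\cos(A+B))$ isolates the non-oscillatory term $(-1)^{k-k'}/2\int_a^b g$, and the remaining frequency-$2\pi n$ piece is expanded by the iterated integration by parts of Proposition~\ref{thm:asexp}, with the remainder controlled by $\int_a^b|g^{(m)}|$. The only remaining work is the sign and prefactor bookkeeping you already flag, including reconciling $e^{i\pi n(\omg_k+\omg_{k'})}$ with the stated form $e^{i\pi n(k+k')/2}$ via the definition of the Fourier frequencies.
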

\begin{proof}
  Using the product-to-sum angle formula that $\sin(\theta) \sin(\phi) =
  \tfrac{1}{2}(\cos(\theta - \phi) - \cos(\theta + \phi))$ and standard
  manipulations of the complex exponential, the left hand side equation can be
  re-written as
  \begin{equation*} \label{eq:r2_2}
  \frac{1}{2} \cos \left(\pi n \frac{k - k'}{n} \right) \int_{a}^b g(\omg) \dif \omg
  +
  \frac{1}{2} \Re \set{
    e^{i \pi n \frac{k + k'}{2}}
    \int_{a}^b e^{-i 2 \pi n \omg} g(\omg) \dif \omg
  }.
  \end{equation*}
  But $\cos(\pi n (k - k')) = (-1)^{k-k'}$, which provides the simplification of
  the first term. For the second term, just as in the proof of Proposition
  \ref{thm:asexp}, we simply do integration by parts as many times as possible
  on the now standard-form oscillatory integral in the second term to get
  \begin{equation*} 
    \int_a^b e^{-2 \pi i n \omg} g(\omg) \dif \omg
    =
    - \sum_{j=0}^{m-1} (-2 \pi i n)^{-j-1} \set{
    g^{(j)}(b) e^{-2 \pi i n b}
    -
    g^{(j)}(a) e^{-2 \pi i n a}
    }
    +
    \bO(n^{-m-1}).
  \end{equation*}
  An elementary rearrangement gives the result.
\end{proof}
While this proposition is presented in generality, there is a reasonable amount
of additional simplification that one can do with the right-hand side based on
the value of $k + k' \pmod{4}$. In particular, we see that $e^{i \pi n
(k+k')/2} = i^{k-k' \pmod{4}}$, which means that every single complex
exponential on the right-hand side of (\ref{eq:dftcov_as}) simplifies nicely. If
we assume that $k = k' \pmod{4}$ so that $e^{i \pi n (k+k')/2} = 1$, the
asymptotic expansion term can be more concretely expanded to
\begin{align} \label{eq:dftcov_as_elaborate}
\notag
&\Re\set{
  (\cos(2 \pi b) + i \sin(2 \pi b))
  \sum_{j=0}^{m-1} \frac{g^{(j)}(b)}{(-i 2 \pi n)^{j+1}}
  -
  (\cos(2 \pi a) + i \sin(2 \pi a))
  \sum_{j=0}^{m-1} \frac{g^{(j)}(a)}{(-i 2 \pi n)^{j+1}}
}
\\
=
&\cos(2 \pi b) 
\sum_{j=0, \, j \text{ odd}}^{m-1} (-1)^{1+\lfloor j/2 \rfloor} 
\frac{g^{(j)}(b)}{(2 \pi n)^{j+1}}
-
\cos(2 \pi a) 
\sum_{j=0, \, j \text{ odd}}^{m-1} (-1)^{1+\lfloor j/2 \rfloor} 
\frac{g^{(j)}(a)}{(2 \pi n)^{j+1}}
\\
\notag
+
&\sin(2 \pi b) 
\sum_{j=0, \, j \text{ even}}^{m-1} (-1)^{2+\lfloor j/2 \rfloor} 
\frac{g^{(j)}(b)}{(2 \pi n)^{j+1}}
-
\sin(2 \pi a) 
\sum_{j=0, \, j \text{ even}}^{m-1} (-1)^{2+\lfloor j/2 \rfloor}
\frac{g^{(j)}(a)}{(2 \pi n)^{j+1}}.
\end{align}
And while we don't enumerate the other cases for the three values of $e^{i \pi n
(k+k')/2}$, the only difference is in the odd/even indexing and the alternating
sign inside the sum. The key takeaway here is that the entire integral
(\ref{eq:dftcov2_p1}) can actually be written as a very smooth integral term
($\int_a^b g(\omg) \dif \omg$), four trigonometric functions that are
non-oscillatory since they have unit wavelength and $a, b \in [-1/2, 1/2]$ with
coefficients taken from functions that are themselves smooth (since $g \in
\mathcal{C}^{(m)}([a,b])$), and a remainder term depending on high-order
derivatives of $g(\omg)$ on $[a, b]$.  As it pertains to the Whittle correction
matrix $\bE$, for each of the Type-I regions we have endpoints $(a = -1/2, b =
\omg_k - \gamma)$, $(a = \omg_k + \gamma, b = \omg_{k'} - \gamma)$, and $(a =
\omg_{k'}+\gamma, b = 1/2)$ respectively, and so the effective contribution of
the Type-I integrals to $\bE_{k,k'}$ is given by 
$
  \int_{[-1/2, 1/2] \sem (B_{\gamma}(\omg_k) \cup B_{\gamma}(\omg_{k'}))}
  \tilde{S}_{n,k,k'}(\omg) \dif \omg
$
and a total of $12$ trigonometric functions at the three sets of given endpoints
$a$ and $b$ with smoothly varying coefficients. Given that the trigonometric
functions are analytic and that $g \in \mathcal{C}^{(m)}([-1/2, 1/2])$, since
very smooth kernel matrices often exhibit rapid spectral decay (an observation
dating back to at least \cite{greengard1987} with the Fast Multipole Method
(FMM)),  we expect the contribution of the Type-I intervals to $\bE$ to have
exceptionally fast spectral decay and be severely rank-deficient so long as the
remainder from the asymptotic expansion is small. With slightly more effort, we
may repeat this analysis on the Type-II regions with singularities.

For the first Type II region of $[\omg_k - \gamma, \omg_k + \gamma]$, the story is
only slightly more complicated. This time we introduce the condensed notation of
\begin{equation*} 
  \tilde{S}_{n, k'}(\omg) = \csc(\pi(\omg_{k'} - \omg))(S(\omg) - S(\omg_k))
\end{equation*}
for the bounded and non-oscillatory part of the integrand, and we study
\begin{equation} \label{eq:dftcov2_typeII}
  \int_{\omg_k + \gamma}^{\omg_k - \gamma} \sin(\pi n (\omg_k - \omg)) \sin(\pi n (\omg_{k'} - \omg))
  \csc(\pi(\omg_k - \omg)) \tilde{S}_{n, k'}(\omg) \dif \omg.
\end{equation}
The important observation to make here is that the singularity presented by
$\csc(\pi (\omg_k - \omg))$ is simple. Recalling the Laurent expansion $\csc(t) =
t^{-1} + \tfrac{t}{6} + \tfrac{7t^2}{360} + ...$, we see that one can simply
``subtract off" the singularity and obtain a standard power-series type
representation of $\csc(t) - t^{-1} \approx P_l(t)$ for $t \in [-\gamma,
\gamma]$ and some low-order polynomial $P_l$ based on the Laurent series.  This
motivates the decomposition of (\ref{eq:dftcov2_typeII}) into
\begin{align*} \label{eq:dftcov2_typeII_decomp}
  &\int_{\omg_k + \gamma}^{\omg_k - \gamma} \sin(\pi n (\omg_k - \omg)) \sin(\pi n (\omg_{k'}
  - \omg)) P_l(\omg_k - \omg) \tilde{S}_{n, k'}(\omg) \dif \omg
  \\
  \notag
  +
  &\int_{\omg_k + \gamma}^{\omg_k - \gamma} \sin(\pi n (\omg_k - \omg)) \sin(\pi n (\omg_{k'}
  - \omg)) \frac{\tilde{S}_{n, k'}(\omg)}{\omg_k - \omg} \dif \omg.
\end{align*}
The first term above can again be expanded as a small number of unit-wavelength
trigonometric functions via Proposition \ref{prop:dftcov_as} with $g(\omg) =
P_l(\omg_k - \omg) \tilde{S}_{n,k'}(\omg)$. Because $\tilde{S}_{n,k'}(\omg) = 0$ at
$\omg = \omg_k$ and has been assumed to be smooth, the second term is also not
singular and is a standard oscillatory integral. Another application of
Proposition \ref{prop:dftcov_as} with $g(\omg) = (\omg_k - \omg)^{-1}
\tilde{S}_{n,k'}(\omg)$ can be applied, making the entire contribution of
(\ref{eq:dftcov2_typeII}) expressible as a linear combination of a small number
of unit-wavelength trigonometric functions with smoothly varying coefficients.

The final segment of $[-1/2, 1/2]$ to study is the Type II region $[\omg_{k'} -
\gamma, \omg_{k'} + \gamma]$, where the singularity caused by $\csc(\pi
(\omg_{k'} - \omg))$ has not already been factored out. Introducing one final
condensed integrand notation of
\begin{equation*} 
  \tilde{S}_{n, k}(\omg) = \csc(\pi(\omg_k - \omg))(S(\omg) - S(\omg_k)),
\end{equation*}
we again decompose the contribution of that segment as
\begin{align*} 
  &\int_{\omg_{k'} + \gamma}^{\omg_{k'} - \gamma} \sin(\pi n (\omg_k - \omg)) \sin(\pi n (\omg_{k'} - \omg))
  \csc(\pi(\omg_{k'} - \omg)) \tilde{S}_{n, k}(\omg) \dif \omg
  \\
  =
  \notag
  &\int_{\omg_{k'} + \gamma}^{\omg_{k'} - \gamma} \sin(\pi n (\omg_k - \omg)) \sin(\pi n
  (\omg_{k'} - \omg)) P_l(\omg_{k'} - \omg) \tilde{S}_{n, k}(\omg) \dif \omg
  \\
  \notag
  +
  &\int_{\omg_{k'} + \gamma}^{\omg_{k'} - \gamma} \sin(\pi n (\omg_k - \omg)) \sin(\pi n
  (\omg_{k'} - \omg)) \frac{\tilde{S}_{n, k}(\omg)}{\omg_{k'} - \omg} \dif \omg.
\end{align*} 
The first term in the divided integral can be handled yet again by Proposition
\ref{prop:dftcov_as} just as above. The only new wrinkle is that the second term
in the right-hand side now still has a singularity. \new{Thankfully, with one more
integral splitting we can re-express this last term with}
\begin{align*} 
  &\int_{\omg_{k'} + \gamma}^{\omg_{k'} - \gamma} \sin(\pi n (\omg_k - \omg)) \sin(\pi n
  (\omg_{k'} - \omg)) \frac{\tilde{S}_{n, k}(\omg)}{\omg_{k'} - \omg} \dif \omg
  \\
  =
  &\int_{\omg_{k'} + \gamma}^{\omg_{k'} - \gamma} \sin(\pi n (\omg_k - \omg)) \sin(\pi n
  (\omg_{k'} - \omg)) \frac{\tilde{S}_{n, k}(\omg) - \tilde{S}_{n,k}(\omg_{k'})}{\omg_{k'} - \omg} \dif \omg
  \\
  +
  \tilde{S}_{n,k}(\omg_{k'}) 
  &\int_{\omg_{k'} + \gamma}^{\omg_{k'} - \gamma} \sin(\pi n (\omg_k - \omg)) 
  \sin(\pi n (\omg_{k'} - \omg)) \frac{1}{\omg_{k'} - \omg} \dif \omg.
\end{align*}
\new{The first term on the right-hand side is no longer singular because
$\tilde{S}_{n,k}(\omg) - \tilde{S}_{n,k}(\omg_{k'})$ is smooth and goes to zero
at $\omg = \omg_{k'}$ faster than $(\omg_{k'} - \omg)^{-1}$ blows up, and so one
may apply Proposition \ref{prop:dftcov_as}. The second integral is the integral
of $\sin$ against a bounded function since $\sin(\pi n (\omg_{k'} - \omg_k))$ is
also smooth and goes to zero as $\omg \rightarrow \omg_{k'}$, and thus can
be handled with the even more simple asymptotic expansion analagous to the one
used in Proposition \ref{thm:asexp} that again represents it as a small number
of smooth functions.}

Combining all of these segmented contributions, we finally conclude in
representing $\bE_{k,k'}$ as a small number of very smooth functions. And while
ostensibly breaking the integral into five pieces, two of which need to be
broken up again into two or three additional terms to be suitable for expansion,
should lead to so many smooth terms that the matrix is not particularly
rank-deficient, we note that all of these trigonometric functions have unit
wavelength and are potentially linearly dependent with each other, and that
derivatives of the corresponding function $g$ in each application of Proposition
\ref{prop:dftcov_as} are by supposition also well-behaved. 

This derivation is also illuminating about the sources of error in Whittle
approximations and the degree of rank-structure in the Whittle correction
matrices. For example, if $S$ has rough points where it is continuous but not
smooth, then the above domain partitioning will have to be refined so that
Corollary $1$ can be applied to fix the accuracy of the asymptotic
approximation. This will then mean that $\bE_{k,k'}$ will need more asymptotic
expansion-sourced terms for the expansion to be accurate. \new{For spectral
densities $S(\omg)$ that have many such rough points, the number of unique terms
in the split expansion can certainly add up to a degree that ruins this rank
structure. And if $S(\omg)$ and its derivatives are highly oscillatory, then
terms like (\ref{eq:dftcov_as_elaborate}) may not actually yield particularly
low-rank matrices despite being smooth.}

\section{Numerical assembly and action of the Whittle correction} \label{sec:lowrank_num}
With this theoretical justification for the low-rank structure of the Whittle
correction matrix, we now turn to discussing how to actually compute and
assemble the matrices $\bm{U}$ and $\bm{V}$ such that $\Fr \bS \Fr^H - \bm{D}
\approx \bm{U} \bm{V}^H$. First, we note that the action of $\bS$ onto vectors
can be done in $\bO(n \log n)$ with the familiar circulant embedding technique
\cite{wood1994} once the values $\set{h_k}_{k=0}^{n-1}$ have been computed.
While we refer readers to the references for a more complete discussion of
circulant embedding, we will provide a brief discussion here. The symmetric
Toeplitz matrix $\bS$ is specified entirely by its first column $[h_0, h_1, ...,
h_{n-1}]$. This column can be extended to one of length $2n - 1$ given by
$\bm{c} = [h_0, h_1, ..., h_{n-1}, h_{n-1}, h_{n-2}, ..., h_1]$, and if one
assembles a Toeplitz matrix with this length $2n - 1$ column, the matrix is in
fact circulant and diagonalized by the FFT \cite{gray2006}. Since
the FFT can be applied in $\bO(n \log n)$ complexity, so can this
augmented circulant matrix. Thus, one can compute $\bS \bv$ for any vector $\bv$
by extracting the first $n$ components of $\bm{C} [\bv, \bm{0}_{n-1}]$, where
$\bm{C}$ is the circulant matrix made with $\bm{c}$, and $\bm{0}_{n-1}$ is $n-1$
many zeros appended to $\bv$.

Letting $\Fr_n$ be the FFT of size $n$ (briefly now defined without the
``fftshift"), we can thus summarize the action of $\Fr_n \bS \Fr_n^H - \bm{D}$
on a vector $\bv$ with
\begin{equation} \label{eq:fast_sv}
  (\Fr_n \bS \Fr_n^H - \bm{D}) \bv
  =
  \Fr_n \left\{
    \Fr_{2n-1}^H
    \left(
    \set{\Fr_{2n-1} \bm{c}}
    \circ
    \Fr_{2n-1} \mat{ \Fr_{n}^H \bv \\ \bm{0}_{n-1}}
    \right)
  \right\}_{1:n}
  -
  \bm{D}\bv.
\end{equation}
Assuming that $\Fr_{2n-1} \bm{c}$ is pre-computed, this means that the action of
$\Fr \bS \Fr^H - \bm{D}$ on $\bv$ requires four FFTs, two of size $n$ and two of
size $2n-1$. 

With this established, we turn to the process of approximating $\bC$ from the
position of only being able to apply it to vectors efficiently.  The field of
randomized low-rank approximations has become an important part of modern
numerical linear algebra, and we refer readers to review papers like
\cite{halko2011} and the more recent \cite{tropp2023} for broad introductions
and context. As it pertains to this work, it is sufficient to discuss the
problem of using randomized algorithms to estimate the range of a matrix.
Letting $\bm{A} \in \R^{n \times n}$ denote an arbitrary matrix of rank $r$,
\cite{halko2011} shows that the orthogonal matrix $\bm{Q}$ such that
\begin{equation*} 
  \bm{A} \bm{\Omega} = \bm{Q} \bm{R},
\end{equation*}
where $\bm{\Omega} \in \R^{n \times (r+p)}$ is a ``sketching"/test matrix, which
in this work will be made of i.i.d. standard normal random variables (although
other and potentially faster choices are available), is an approximate basis for
the column space of $\bm{A}$. \new{The matrix $\bm{Q}$ itself may be obtained by
performing a standard QR factorization on the product $\bm{A} \bm{\Omega}$}, and
the parameter $p$ is an oversampling parameter which is often picked to be some
small number like $p=5$ \cite{halko2011}. 
From there, one obtains a simple low-rank representation of
$\bm{A}$ as
\begin{equation*} 
  \bm{A} \approx \underbrace{\bm{Q}}_{\bm{U}} \underbrace{\bm{Q}^H
  \bm{A}}_{\bm{V}^H}.
\end{equation*}
\new{This entire process is summarized in Algorithm \ref{alg:assemble}.}

\begin{algorithm}
\caption{\new{Compute the approximation $\Fr \bS \Fr^H \approx \bm{D} + \bm{U}
\bm{V}^H$}}
\label{alg:assemble}
\begin{algorithmic}
  \STATE{\new{1. Using a method from Section \ref{sec:asexp}, obtain the
  autocovariance terms $\set{h_j}_{j=0}^{n-1}$}.}
  \STATE{\new{2. Create diagonal matrix $\bm{D}_{j,j} = S(\omg_j)$ at Fourier
  frequencies $\set{\omg_j}_{j=1}^n$}.}
  \STATE{\new{3. Draw sketching matrix $\bm{\Omega} \in \R^{n \times (p+r)}$ with
  $\bm{\Omega}_{j,k} \overset{\text{i.i.d.}}{\sim} \Nd(0,1)$}.}
  \STATE{\new{4. Using Equation \ref{eq:fast_sv}, compute $(\Fr_n \bS \Fr_n^H - \bm{D})
  \bm{\Omega}$}.}
  \STATE{\new{5. Compute $(\Fr_n \bS \Fr_n^H - \bm{D}) \bm{\Omega} = \bm{Q}
  \bm{R}$}.}
  \STATE{\new{6. Set $\bm{U} = \bm{Q}$}.}
  \STATE{\new{7. Using Equation \ref{eq:fast_sv} again, compute $\bm{V} = (\Fr_n \bS
  \Fr_n^H - \bm{D}) \bm{Q}$}.}
\end{algorithmic}
\end{algorithm}

This low-rank representation can easily be converted to other truncated
factorization types like a partial SVD or eigendecomposition \cite{halko2011}.
\new{Since it will be used in evaluating the log-likelihood via the
Sherman-Morrison-Woodbury inversion formula, however, in the case where one
chooses to convert to a partial eigendecomposition or SVD but overestimates the
numerical rank in their choice of $r$, it is prudent for the sake of
conditioning to discard degenerate eigen- or singular vectors.} Connecting
this to Section \ref{sec:asexp}, then, we see that obtaining a rank $r$
approximation $\Fr \bS \Fr^H - \bm{D}$ requires applying it to $r + p$ many
random vectors. While there is sufficient structure in this specific
matrix-vector application that one could reduce the number of FFTs from
$4(r+p)$, the implementation used in this work does not employ any particular
optimization of that form and is nonetheless satisfyingly fast for an
effectively exact method. 

\subsection{Approximating the log-likelihood} 

Armed with this low-rank approximation, one now has all of the pieces for
working with $\Fr \bS \Fr^H \approx \bm{D} + \bm{U} \bm{V}^H$.
Letting 
\begin{equation} \label{eq:appx_nll}
  2 \tilde{\ell}(\bth \sv \Fr \by)
  =
  \log \abs{\bm{D} + \bm{U} \bm{V}^H} + (\Fr \by)^H (\bm{D} + \bm{U}
  \bm{V}^H)^{-1} (\Fr \by)
  \approx 
  2 \ell(\bth \sv \by)
\end{equation}
denote the approximated log-likelihood, we see that, post-assembly of $\bm{U}$
and $\bm{V}$, evaluation of $\tilde{\ell}(\bth \sv \bv)$ for an arbitrary vector
$\bv$ is possible in linear complexity. For the log-determinant term, using the
matrix determinant lemma we see that
\begin{equation*} 
  \log \abs{\bm{D} + \bm{U} \bm{V}^H}
  =
  \log\abs{\I_r + \bm{V}^H \bm{D}^{-1} \bm{U}} + \log \abs{\bm{D}}.
\end{equation*}
For the quadratic form, we may compute it using the Sherman-Morrison-Woodbury
formula as
\begin{equation*} 
\bv^T (\bm{D} + \bm{U} \bm{V}^H)^{-1} \bv
=
  \bv^T \bm{D}^{-1} \bv
  -
  \bv^T \bm{D}^{-1} \bm{U}(\I_r + \bm{V}^H \bm{D}^{-1} \bm{U})^{-1} \bm{V}^H
  \bm{D}^{-1} \bv.
\end{equation*}
Since all of the matrix-matrix operations in the right-hand sides of those two
equations are for small $\R^{r \times r}$ matrices, we see that both the
log-determinant and the quadratic form can be evaluted in $\bO(n)$ complexity.
And since the assembly of $\bm{D} + \bm{U} \bm{V}^H$ is done in $\bO(n \log n)$,
we conclude that the full job of assembling the approximation and then
evaluating the approximated log-likelihood runs in quasilinear time complexity and
linear storage complexity.

\subsection{Gradients, factorization, and information matrices} 

If one assumes that a parametrically indexed spectral density $S_{\bth}(\omg)$
has partial derivatives $\partial_{\theta_j} S_{\bth}(\omg)$ that share the same
smoothness structure as $S_{\bth}$ with respect to $\omg$ (such as being
piecewise $\mathcal{C}^{(m)}([-1/2, 1/2])$), then the above arguments about the
low-rank structure of $\Fr \bS \Fr^H$ apply equally well to $\Fr \set{
\partial_{\theta_j} \bS_{\bth} } \Fr^H$. For notational clarity, throughout this
section we absorb the DFT matrix $\Fr$ into $\bS$ and write all equations
\new{in terms of the matrix $\tbS_{\bth} = \Fr \bS_{\bth} \Fr^H$ and transformed
data vector $\tilde{\by} = \Fr \by$}. 

To start, we introduce the subscript $j$ and let $\bm{D}_j$, $\bm{U}_j$, and
$\bm{V}_j$ be matrices such that
\begin{equation} \label{eq:lrdj}
  \frac{\partial}{\partial \theta_j} \tbS_{\bth}
  \approx
  \bm{D}_j + \bm{U}_j \bm{V}_j^H.
\end{equation}
The $j$-th term of the gradient $\nabla_{\bth} \tilde{\ell}(\bth \sv \tby)$ is
given by
\begin{equation} \label{eq:grad}
  2 [\nabla \tilde{\ell}(\bth \sv \tby)]_j
  =
  \text{tr}\left(
  \tbS_{\bth}^{-1} 
  \set{
    \frac{\partial}{\partial \theta_j} \tbS_{\bth}
  }
  \right)
  -
  \tby^H
  \tbS_{\bth}^{-1} \set{
    \frac{\partial}{\partial \theta_j} \tbS_{\bth}
  }
  \tbS_{\bth}^{-1}
  \tby.
\end{equation}
Observing this equivalent structure in (\ref{eq:lrdj}), we note that the
quadratic form term in (\ref{eq:grad}) can easily be evaluated in $\bO(n)$
complexity once the two low-rank approximations have been assembled by again
making use of the Sherman-Morrison-Woodbury formula and the speed of simple
matrix-vector products. More interesting, however, and what sets this
approximation method apart from more complex general-purpose matrix compression
methods is that the trace can also be computed exactly without resorting to
stochastic trace estimation (see \cite{geoga2020} and references therein for
discussion). In particular, note that the Sherman-Morrison-Woodbury formula
given in the above section implies that $(\bm{D} + \bm{U} \bm{V}^H)^{-1}$ can be
again represented as a low rank perturbation of a diagonal matrix, which we will
denote as $\tilde{\bm{D}} + \tilde{\bm{U}} \tilde{\bm{C}} \tilde{\bm{V}}^H$.
Applying this, we see that
\begin{align} \label{eq:dprod}
  \notag
  (\bm{D} + \bm{U} \bm{V}^H)^{-1}(\bm{D}_j + \bm{U}_j \bm{V}_j^H)
  &=
  (\tilde{\bm{D}} + \tilde{\bm{U}} \tilde{\bm{C}} \tilde{\bm{V}}^H)
  (\bm{D}_j + \bm{U}_j \bm{V}_j^H)
  \\
  &=
  \notag
  \tilde{\bm{D}} \bm{D}_j
  +
  \tilde{\bm{D}} \bm{U}_j \bm{V}^H
  +
  \tilde{\bm{U}} \tilde{\bm{C}} \tilde{\bm{V}}_H \bm{D}_j
  +
  \tilde{\bm{U}} \tilde{\bm{C}} \tilde{\bm{V}}^H
  \bm{D}_j + \bm{U}_j \bm{V}_j^H
  \\
  &=
  \tilde{\bm{D}} \bm{D}_j
  +
  \bm{M}.
\end{align}
While this looks problematic, we note that the matrix $\bm{M}$ defined as the
last three terms in the above equation is a sum of three matrices whose rank is
$\leq r$, and so the rank of $\bm{M}$ is at most $3 r$. Second, because each of
those terms is already a low-rank representation that can be applied to vectors
in $\bO(n)$ time, we can simply repeat the randomized low-rank approximation
strategy from earlier, computing first a randomized basis for the column space
with 
\begin{equation} \label{eq:dprod_sketch}
  (\tilde{\bm{D}} \bm{U}_j \bm{V}^H
  +
  \tilde{\bm{U}} \tilde{\bm{C}} \tilde{\bm{V}}^H \bm{D}_j
  +
  \tilde{\bm{U}} \tilde{\bm{C}} \tilde{\bm{V}}^H
  \bm{D}_j + \bm{U}_j \bm{V}_j^H)
  \bm{\Omega}
  =
  \bm{Q}_j \bm{R}_j
\end{equation}
and then re-compressing to a single low-rank representation with $\bm{Q}_j
\bm{Q}_j^H \bm{M}$. From there, we observe the simple fact that
$\text{tr}(\bm{A} + \bm{B} \bm{C}^T) = \text{tr}(\bm{A}) + \text{tr}(\bm{C}^T
\bm{B})$ for any matrices $\bm{A}$, $\bm{B}$, and $\bm{C}$ and conclude that the
trace in (\ref{eq:grad}) can be computed exactly and in $\bO(n)$ time. \new{This
procedure for computing the gradient in quasilinear complexity is summarized in
Algorithm \ref{alg:grad}}.  As will be demonstrated in the next section, these
gradients are computed to similarly high accuracy as the log-likelihood
evaluation itself, providing effectively the same number of digits.

\begin{algorithm}
\caption{\new{Compute the gradient approximation (\ref{eq:grad})}}
\label{alg:grad}
\begin{algorithmic}
  \WHILE{$j < p$}
  \STATE{\new{1. Using Algorithm \ref{alg:assemble} with $S_{\bth}(\omg)$, create $\bm{D} + \bm{U}
  \bm{V^T}$}.}
  \STATE{\new{2. Using Algorithm \ref{alg:assemble} with
  $\frac{\partial}{\partial \theta_j} S_{\bth}(\omg)$, assemble 
  $\bm{D}_j + \bm{U}_j \bm{V}_j^T$ from (\ref{eq:lrdj})}.}
  \STATE{\new{3. Using (\ref{eq:dprod}), compute $\bm{M}$}.}
  \STATE{\new{4. Draw sketching matrix $\bm{\Omega} \in \R^{n \times (3p + r)}$
  with $\bm{\Omega}_{j,k} \overset{\text{i.i.d.}}{\sim} \Nd(0,1)$}.}
  \STATE{\new{5. Re-compress $\bm{Q}_j \bm{Q}_j^H \bm{M} = \bm{A}
  \bm{B}^T$ with the column space basis $\bm{Q}_j$ of (\ref{eq:dprod_sketch})}.}
  \STATE{\new{6. Compute $\tr(\tbS_{\bth}^{-1}
  \set{\frac{\partial}{\partial \theta_j} \tbS_{\bth}}) = \tr(\tilde{\bm{D}} \bm{D}_j)
  + \tr(\bm{B}^T \bm{A})$}.}
  \STATE{\new{7. Compute 
  $
  \by^H
  \tbS_{\bth}^{-1} \set{
    \frac{\partial}{\partial \theta_j} \tbS_{\bth}
  }
  \tbS_{\bth}^{-1}
  \by
  $
  using the above representations and the Sherman-Morrison-Woodbury identity}.}
  \STATE{\new{8. Obtain the gradient term (\ref{eq:grad})}.}
  \ENDWHILE
\end{algorithmic}
\end{algorithm}

Another nice property of this approximation structure for $\tbS$ is that it
admits a simple and direct symmetric factorization. In the above discussions for
notational simplicity we have worked with the low rank form $\bm{D} + \bm{U}
\bm{V}^H$. But as previously mentioned, these low-rank representations can be
easily and quickly converted to other structures like a partial SVD or
eigendecomposition \cite{halko2011}. For this factorization, it is easiest to
work with a representation like $\bm{D} + \bm{U} \bm{\Lambda} \bm{U}^H$, where
$\bm{\Lambda} \in \R^{r \times r}$ is a diagonal matrix of eigenvalues of the
low-rank Whittle correction. In this form, a slight manipulation and then
applying Theorem $3.1$ of \cite{ambikasaran2014} gives a very simple symmetric
factorization as
\begin{align} \label{eq:symfact}
  \notag
  \bm{D} + \bm{U} \bm{\Lambda} \bm{U}^H
  &=
  \bm{D}^{1/2}(\I + \tilde{\bm{U}} \bm{\Lambda} \tilde{\bm{U}}^H)\bm{D}^{1/2}
  \\
  &=
  \bm{D}^{1/2}
  (\I + \tilde{\bm{U}} \bm{X} \tilde{\bm{U}}^H)
  (\I + \tilde{\bm{U}} \bm{X} \tilde{\bm{U}}^H)^H
  \bm{D}^{1/2}
  =
  \bm{W} \bm{W}^H,
\end{align}
where $\tilde{\bm{U}} = \bm{D}^{-1/2} \bm{U}$, $\bm{X} = \bm{L}^{-H} (\bm{G}
- \I) \bm{L}^{-1}$, $\bm{U}^T \bm{U} = \bm{L} \bm{L}^T$, and $\I + \bm{L}^T
\bm{\Lambda} \bm{L} = \bm{G} \bm{G}^T$. While that is many equations, the actual
computation of the matrix $\bm{X}$ used in assembling $\bm{W}$ is is done in
just a few $\R^{r \times r}$ matrix-matrix operations (Algorithms $1$ and $2$ in
\cite{ambikasaran2014}).

This symmetric factor is useful for several purposes. For one, it gives a means
of obtaining exact simulations of any time series in $\bO(n \log n)$ time,
although of course standard periodic embedding would be a more simple and
efficient way of achieving the same thing. In the setting of derivative
information, it offers a more novel benefit in that it enables very accurate
stochastic estimation of Fisher information matrices.

The Fisher information matrix for parameters $\bth$ in a Gaussian model like the
one used in this work is given by
\begin{equation} \label{eq:fish}
  \bm{I}(\bth)_{j,k} = \frac{1}{2} \text{tr}\left(
    \tbS_{\bth}^{-1} 
    \set{
      \frac{\partial}{\partial \theta_j} \tbS_{\bth}
    }
    \tbS_{\bth}^{-1} 
    \set{
      \frac{\partial}{\partial \theta_k} \tbS_{\bth}
    }
  \right).
\end{equation}
This matrix is the asymptotic precision of the MLE under sufficient regularity
conditions, and is often substituted in place of the Hessian matrix of a
log-likelihood $\ell(\bth)$ due to the fact that it is easier to compute and is
always positive definite. As discussed above, it is fully possible to scalably
evaluate this sequence of matrix-matrix products to obtain an exact Fisher
information matrix, and in the following section we will provide a verification
that this computation still runs in quasilinear complexity despite the large
number of matrix-matrix operations. But \cite{geoga2020} introduced a fast
``symmetrized" stochastic estimator for the matrix $\bm{I}(\bth)$ that is very
accurate and can be computed in a fraction of the runtime cost of the exact
$\bm{I}(\bth)$.  Stochastic trace estimation is a rich field with a broad
literature (we refer readers to \cite{avron2011} for a useful introduction and
overview), but for this work the fundamental observation to make is that for any
random vector $\bm{u}$ with $\E \bu = \bm{0}$ and $\V \bu = \I$, we have that
$\E \bu^T \bm{A} \bu = \text{tr}(\bm{A})$ for any matrix $\bm{A}$.  The
\emph{sample average approximation} (SAA) trace estimator is then based on
drawing several i.i.d.  vectors $\bu$, denoted $\set{\bu_l}_{l=1}^L$, and
rather evaluating the trace directly, instead using 
\begin{equation*} 
  L^{-1} \sum_{l=1}^L \bu_l^T \bm{A} \bu_l \approx \text{tr}(\bm{A}).
\end{equation*}
The variance of this estimator depends on several properties of $\bm{A}$, but a
specific choice of $\bu$ having i.i.d. random signs is particularly popular
because $\V \bu^T \bm{A} \bu = 2(\norm[F]{\bm{A}}^2 - \sum_j A_{j,j}^2)$, so that
for diagonally concentrated matrices this estimator can be quite accurate. 

Stochastic trace estimation has been applied to the Gaussian process computation
problem in many works, notably first in \cite{anitescu2012}. In \cite{stein2013}, a
theorem was provided indicating that if $\tbS_{\bth} = \bm{W} \bm{W}^T$, then using the
simple property that $\text{tr}(\bm{A} \bm{B}) = \text{tr}(\bm{B} \bm{A})$
combined with this factorization, one can instead compute $\bm{I}_{j,k}$ with 
\begin{equation} \label{eq:symfish}
  \bm{I}_{j,k}(\bth) = \frac{1}{2} \text{tr}
  \left(
    \bm{W}^{-T} 
    \set{
      \frac{\partial}{\partial \theta_j} \tbS_{\bth}
    }
    \tbS_{\bth}^{-1} 
    \set{
      \frac{\partial}{\partial \theta_k} \tbS_{\bth}
    }
    \bm{W}^{-1} 
  \right),
\end{equation}
and the variance of the SAA estimator for (\ref{eq:symfish}) is bounded above by
the variance of the SAA estimator for (\ref{eq:fish}). This theoretical
observation was verified in \cite{geoga2020} and subsequent works, and
\cite{geoga2020} provided an even further ``symmetrized" trace to estimate for
off-diagonal elements. Introducing the notation $\tbS_j = \partial_{\theta_j}
\tbS_{\bth}$, one may compute a fully ``symmetrized" trace with 
\begin{equation} \label{eq:symsymfish}
  \widehat{\bm{I}}_{j,k}(\bth) = 
  (4 L)^{-1} \sum_{l=1}^L
  \bu_l^T \bm{W}^{-1} (\tbS_j + \tbS_k) \tbS^{-1} (\tbS_j + \tbS_k) \bm{W}^{-T} \bu_l
  - \frac{1}{2}\widehat{\bm{I}}_{j,j} - \frac{1}{2}\widehat{\bm{I}}_{k,k},
\end{equation}
where diagonal elements $\bm{I}_{j,j}$ are trivial to fully symmetrize and thus
can be computed in advance of the off-diagonal ones.

This estimator $\widehat{\bm{I}}$ enjoys both very high accuracy and great
computational benefits. The primary computational benefit is that this can be
computed in a single pass over the derivative matrices $\tbS_j$, since all one
needs to evaluate (\ref{eq:symsymfish}) is $\tbS^{-1}$ and $\set{\tbS_j
\bm{W}^{-T} \bu_l}_{l=1}^L$. Since one commonly uses $L \approx 70$ or some
similarly small number of SAA vectors, for example, it is very easy to pre-solve
the SAA vectors $\set{\bu_l}$ with $\bm{W}^{-T}$ and then in a single for-loop
assemble and apply $\tbS_j$ to each of them. Whether those pre-applied vectors
are saved to disk or kept in RAM, there is no more need for the derivative matrix
$\tbS_j$ after that point, and so one \new{does not need to} have \new{two
derivative matrices} $\tbS_j$ and $\tbS_k$ instantiated at the same time to
fully evaluate $\widehat{\bm{I}}$. For models with many parameters, the speedup
that this affords can be substantial. \new{Algorithm \ref{alg:sfish} summarizes
the computational procedure for computing (\ref{eq:symsymfish})}.

\begin{algorithm}
\caption{\new{Compute the stochastic expected Fisher matrix (\ref{eq:symsymfish})}}
\label{alg:sfish}
\begin{algorithmic}
  \STATE{\new{1. Using Algorithm \ref{alg:assemble}, create $\bm{D} + \bm{U}
  \bm{V^T}$}.}
  \STATE{\new{2. Using (\ref{eq:symfact}), compute $\bm{D} + \bm{U} \bm{V}^T =
  \bm{W} \bm{W}^T$}.}
  \STATE{\new{3. Draw SAA vectors $\set{\bv_l}_{l=1}^L$ with $[\bv_l]_k
  \overset{\text{i.i.d.}}{\sim} \text{Rad}(1/2)$}.}
  \WHILE{$j < p$}
    \STATE{\new{1. Using Algorithm \ref{alg:assemble} with
    $\frac{\partial}{\partial \theta_j} S_{\bth}(\omg)$, assemble 
    $\tbS_j = \bm{D}_j + \bm{U}_j \bm{V}_j^T$ from (\ref{eq:lrdj})}.}
    \STATE{\new{2. Compute $\set{\bv_{l,j} = \tbS_j \bm{W}^{-T}
    \bv_l}_{l=1}^L$}.}
  \ENDWHILE
  \STATE{\new{4. Using $\set{\bv_{l,j}}$, assemble (\ref{eq:symsymfish}) using only
  standard inner products}.}
\end{algorithmic}
\end{algorithm}

The Hessian of $\ell(\bth \sv \bv)$, while again absolutely computable in $\bO(n
\log n)$ complexity, is a similar situation to the expected Fisher information
matrix. It has terms given by
\begin{equation*} \label{eq:hess}
  [H \ell(\bth \sv \bv)]_{j,k}
  =
  \bm{I}_{j,k}
  + 
  \text{tr}\left(
    \tbS^{-1} \set{ \partial_{\theta_k} \partial_{\theta_j} \tbS }
  \right)
  -
  \bv^H \left( \partial_{\theta_k}
    \set{
      \tbS^{-1} \tbS_j \tbS^{-1} 
    }
  \right)
  \bv.
\end{equation*}
From this expression one sees that the same things are possible as with the
gradient, but the prefactor on all operations will be larger: one must compute
and re-compress matrix products involving second derivatives of $\bm{D} + \bm{U}
\bm{V}^H$ with respect to model parameters, and one must again do a full
matrix-matrix multiply for the additional trace (unless one opts to again use
SAA). All of these things are perfectly computable using the tools already
introduced in this work. But because of the high prefactor cost, we do not
explore them further here.

\section{Numerical demonstrations} \label{sec:demo}

In this section we will demonstrate the efficiency and accuracy of this method
for approximating $\bS$, the log-likelihood, and its derivatives. In particular,
we will study two spectral densities:
\begin{align*} 
  S_1(\omg) &= \theta_1 (1 - 2 \theta_2 \cos(2 \pi \omg) + \theta_2^2)^{-1} \\
  S_2(\omg) &= \theta_1 e^{-\theta_2|\omg|},
\end{align*}
which have corresponding autocovariance sequences
\begin{align*} 
  h_{1,k} &= \theta_1 (1 - \theta_2^2)^{-1} \theta_2^k  \\
  h_{2,k} &= \frac{2 \theta_1 e^{-\theta_2 /2}(-\theta_2 \cos(\pi k) + e^{\theta_2
  /2}\theta_2 + 2 \pi k \sin(\pi k))}{\theta_2^2 + (2 \pi k)^2}.
\end{align*}
The model $S_1$ was chosen because it is not only very smooth and with a limited
dynamic range, but its periodic extension is even continuous at the endpoints
(which the above section demonstrates are often the dominant source of
structure in the Whittle correction for otherwise smooth SDFs).
This spectral density is in some sense maximally well-behaved, and its
corresponding autocovariance sequence decays predictably quickly. From the
perspective of this approximation framework, this is an ideal setting.
$S_2(\omg)$, on the other hand, is more challenging because of its roughness at
the origin. This model was chosen to demonstrate that the directional derivative
correction introduced above can fully restore the accuracy of asymptotic
expansions even for spectral densities that aren't even always once
differentiable, and that the rank structure of $\bE$ is still very much exploitable.

As a first investigation, we verify the quasilinear runtime complexity of
assembling the matrix approximation and evaluating (\ref{eq:appx_nll}).
\begin{figure}[!ht]
  \centering
\begingroup
  \makeatletter
  \providecommand\color[2][]{%
    \GenericError{(gnuplot) \space\space\space\@spaces}{%
      Package color not loaded in conjunction with
      terminal option `colourtext'%
    }{See the gnuplot documentation for explanation.%
    }{Either use 'blacktext' in gnuplot or load the package
      color.sty in LaTeX.}%
    \renewcommand\color[2][]{}%
  }%
  \providecommand\includegraphics[2][]{%
    \GenericError{(gnuplot) \space\space\space\@spaces}{%
      Package graphicx or graphics not loaded%
    }{See the gnuplot documentation for explanation.%
    }{The gnuplot epslatex terminal needs graphicx.sty or graphics.sty.}%
    \renewcommand\includegraphics[2][]{}%
  }%
  \providecommand\rotatebox[2]{#2}%
  \@ifundefined{ifGPcolor}{%
    \newif\ifGPcolor
    \GPcolortrue
  }{}%
  \@ifundefined{ifGPblacktext}{%
    \newif\ifGPblacktext
    \GPblacktexttrue
  }{}%
  \let\gplgaddtomacro\g@addto@macro
  \gdef\gplbacktext{}%
  \gdef\gplfronttext{}%
  \makeatother
  \ifGPblacktext
    \def\colorrgb#1{}%
    \def\colorgray#1{}%
  \else
    \ifGPcolor
      \def\colorrgb#1{\color[rgb]{#1}}%
      \def\colorgray#1{\color[gray]{#1}}%
      \expandafter\def\csname LTw\endcsname{\color{white}}%
      \expandafter\def\csname LTb\endcsname{\color{black}}%
      \expandafter\def\csname LTa\endcsname{\color{black}}%
      \expandafter\def\csname LT0\endcsname{\color[rgb]{1,0,0}}%
      \expandafter\def\csname LT1\endcsname{\color[rgb]{0,1,0}}%
      \expandafter\def\csname LT2\endcsname{\color[rgb]{0,0,1}}%
      \expandafter\def\csname LT3\endcsname{\color[rgb]{1,0,1}}%
      \expandafter\def\csname LT4\endcsname{\color[rgb]{0,1,1}}%
      \expandafter\def\csname LT5\endcsname{\color[rgb]{1,1,0}}%
      \expandafter\def\csname LT6\endcsname{\color[rgb]{0,0,0}}%
      \expandafter\def\csname LT7\endcsname{\color[rgb]{1,0.3,0}}%
      \expandafter\def\csname LT8\endcsname{\color[rgb]{0.5,0.5,0.5}}%
    \else
      \def\colorrgb#1{\color{black}}%
      \def\colorgray#1{\color[gray]{#1}}%
      \expandafter\def\csname LTw\endcsname{\color{white}}%
      \expandafter\def\csname LTb\endcsname{\color{black}}%
      \expandafter\def\csname LTa\endcsname{\color{black}}%
      \expandafter\def\csname LT0\endcsname{\color{black}}%
      \expandafter\def\csname LT1\endcsname{\color{black}}%
      \expandafter\def\csname LT2\endcsname{\color{black}}%
      \expandafter\def\csname LT3\endcsname{\color{black}}%
      \expandafter\def\csname LT4\endcsname{\color{black}}%
      \expandafter\def\csname LT5\endcsname{\color{black}}%
      \expandafter\def\csname LT6\endcsname{\color{black}}%
      \expandafter\def\csname LT7\endcsname{\color{black}}%
      \expandafter\def\csname LT8\endcsname{\color{black}}%
    \fi
  \fi
    \setlength{\unitlength}{0.0500bp}%
    \ifx\gptboxheight\undefined%
      \newlength{\gptboxheight}%
      \newlength{\gptboxwidth}%
      \newsavebox{\gptboxtext}%
    \fi%
    \setlength{\fboxrule}{0.5pt}%
    \setlength{\fboxsep}{1pt}%
    \definecolor{tbcol}{rgb}{1,1,1}%
\begin{picture}(7360.00,2820.00)%
    \gplgaddtomacro\gplbacktext{%
      \csname LTb\endcsname
      \put(793,975){\makebox(0,0)[r]{\strut{}$10^{-2}$}}%
      \csname LTb\endcsname
      \put(793,1634){\makebox(0,0)[r]{\strut{}$10^{-1}$}}%
      \csname LTb\endcsname
      \put(793,2293){\makebox(0,0)[r]{\strut{}$10^{0}$}}%
      \csname LTb\endcsname
      \put(1031,527){\makebox(0,0){\strut{}$10^{3}$}}%
      \csname LTb\endcsname
      \put(4034,527){\makebox(0,0){\strut{}$10^{4}$}}%
      \csname LTb\endcsname
      \put(7037,527){\makebox(0,0){\strut{}$10^{5}$}}%
    }%
    \gplgaddtomacro\gplfronttext{%
      \csname LTb\endcsname
      \put(2020,2404){\makebox(0,0)[r]{\strut{}$\bO(n \log n)$}}%
      \csname LTb\endcsname
      \put(2020,2230){\makebox(0,0)[r]{\strut{}(debiased)}}%
      \csname LTb\endcsname
      \put(2020,2056){\makebox(0,0)[r]{\strut{}$r = 0$}}%
      \csname LTb\endcsname
      \put(2020,1882){\makebox(0,0)[r]{\strut{}$r = 2$}}%
      \csname LTb\endcsname
      \put(3287,2404){\makebox(0,0)[r]{\strut{}$r = 32$}}%
      \csname LTb\endcsname
      \put(3287,2230){\makebox(0,0)[r]{\strut{}$r = 64$}}%
      \csname LTb\endcsname
      \put(3287,2056){\makebox(0,0)[r]{\strut{}$r = 128$}}%
      \csname LTb\endcsname
      \put(195,1663){\rotatebox{-270.00}{\makebox(0,0){\strut{}Time (s)}}}%
      \csname LTb\endcsname
      \put(3966,167){\makebox(0,0){\strut{}n}}%
    }%
    \gplbacktext
    \put(0,0){\includegraphics[width={368.00bp},height={141.00bp}]{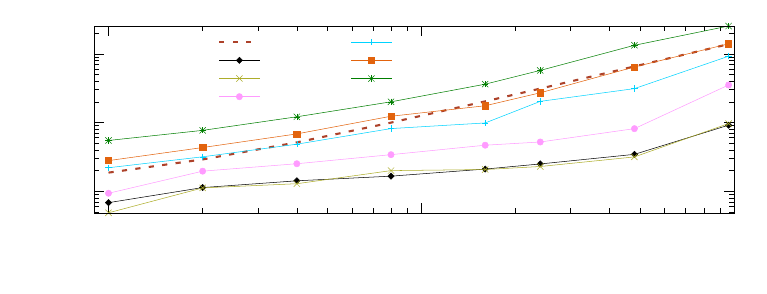}}%
    \gplfronttext
  \end{picture}%
\endgroup

  \vspace{-0.1in}

  \caption{The runtime cost of assembling the approximation $\Fr \bS \Fr^H
  \approx \bm{D} + \bU \bV^H$ and evaluating (\ref{eq:appx_nll}) for various
  different ranks $r$. We also provide the runtime cost of
  assembling just the diagonal matrix of $S(\omg)$ at Fourier frequencies
  ($r=0$) and the debiased Whittle estimator.}
  \label{fig:runtime}
\end{figure}
Figure \ref{fig:runtime} shows the runtime cost of those computations for a
variety of data sizes ranging from $n=1\,000$ to $n=96\,000$ along with a
theoretical $\bO(n \log n)$ line. As one would expect for a fixed-rank
approximation assembled using $\bO(1)$ FFTs, the empirical agreement with the
theoretical complexity is good. 

As a second investigation, we look at the relative error \new{in the
approximated negative log-likelihood $\tilde{\ell}(\bth \sv \tby)$}.  This
study, summarized in Figure \ref{fig:nll_relerr}, clearly demonstrates the way
that this approximation depends on the specific spectral density being modeled.
\new{All log-likelihoods are evaluated at a vector of i.i.d. standard normal
variables, and the parameters are chosen to provide challenging members of each
parametric family, with $(\theta_1, \theta_2) = (1, 0.9)$ for $S_1$ and
$(\theta_1, \theta_2) = (10, 10)$ for $S_2$}.  For $S_1$, we see that even a
rank of $r=2$ provides an effectively exact log-likelihood with $14+$ digits of
precision. The second two sub-plots in Figure \ref{fig:nll_relerr} show two
different applications with $S_2$. The center figure shows the relative accuracy
of the negative log-likelihood when one does \emph{not} correct the asymptotic
expansion for the rough point at the origin when computing the autocovariance
sequence $\set{h_k}_{k=0}^{n-1}$, instead simply pretending that the function
has many derivatives everywhere.  In this work the asymptotic expansions were
used for $k > 2\,000$, and it is clear in the figure that as soon as those
expansions start being used, one goes from $14$ digits in the rank $r=128$
case to $8$ digits, and the quality of the approximation does not meaningfully
improve even as the rank varies by an order of magnitude.
\begin{figure}[!ht]
  \centering
\begingroup
  \makeatletter
  \providecommand\color[2][]{%
    \GenericError{(gnuplot) \space\space\space\@spaces}{%
      Package color not loaded in conjunction with
      terminal option `colourtext'%
    }{See the gnuplot documentation for explanation.%
    }{Either use 'blacktext' in gnuplot or load the package
      color.sty in LaTeX.}%
    \renewcommand\color[2][]{}%
  }%
  \providecommand\includegraphics[2][]{%
    \GenericError{(gnuplot) \space\space\space\@spaces}{%
      Package graphicx or graphics not loaded%
    }{See the gnuplot documentation for explanation.%
    }{The gnuplot epslatex terminal needs graphicx.sty or graphics.sty.}%
    \renewcommand\includegraphics[2][]{}%
  }%
  \providecommand\rotatebox[2]{#2}%
  \@ifundefined{ifGPcolor}{%
    \newif\ifGPcolor
    \GPcolortrue
  }{}%
  \@ifundefined{ifGPblacktext}{%
    \newif\ifGPblacktext
    \GPblacktexttrue
  }{}%
  \let\gplgaddtomacro\g@addto@macro
  \gdef\gplbacktext{}%
  \gdef\gplfronttext{}%
  \makeatother
  \ifGPblacktext
    \def\colorrgb#1{}%
    \def\colorgray#1{}%
  \else
    \ifGPcolor
      \def\colorrgb#1{\color[rgb]{#1}}%
      \def\colorgray#1{\color[gray]{#1}}%
      \expandafter\def\csname LTw\endcsname{\color{white}}%
      \expandafter\def\csname LTb\endcsname{\color{black}}%
      \expandafter\def\csname LTa\endcsname{\color{black}}%
      \expandafter\def\csname LT0\endcsname{\color[rgb]{1,0,0}}%
      \expandafter\def\csname LT1\endcsname{\color[rgb]{0,1,0}}%
      \expandafter\def\csname LT2\endcsname{\color[rgb]{0,0,1}}%
      \expandafter\def\csname LT3\endcsname{\color[rgb]{1,0,1}}%
      \expandafter\def\csname LT4\endcsname{\color[rgb]{0,1,1}}%
      \expandafter\def\csname LT5\endcsname{\color[rgb]{1,1,0}}%
      \expandafter\def\csname LT6\endcsname{\color[rgb]{0,0,0}}%
      \expandafter\def\csname LT7\endcsname{\color[rgb]{1,0.3,0}}%
      \expandafter\def\csname LT8\endcsname{\color[rgb]{0.5,0.5,0.5}}%
    \else
      \def\colorrgb#1{\color{black}}%
      \def\colorgray#1{\color[gray]{#1}}%
      \expandafter\def\csname LTw\endcsname{\color{white}}%
      \expandafter\def\csname LTb\endcsname{\color{black}}%
      \expandafter\def\csname LTa\endcsname{\color{black}}%
      \expandafter\def\csname LT0\endcsname{\color{black}}%
      \expandafter\def\csname LT1\endcsname{\color{black}}%
      \expandafter\def\csname LT2\endcsname{\color{black}}%
      \expandafter\def\csname LT3\endcsname{\color{black}}%
      \expandafter\def\csname LT4\endcsname{\color{black}}%
      \expandafter\def\csname LT5\endcsname{\color{black}}%
      \expandafter\def\csname LT6\endcsname{\color{black}}%
      \expandafter\def\csname LT7\endcsname{\color{black}}%
      \expandafter\def\csname LT8\endcsname{\color{black}}%
    \fi
  \fi
    \setlength{\unitlength}{0.0500bp}%
    \ifx\gptboxheight\undefined%
      \newlength{\gptboxheight}%
      \newlength{\gptboxwidth}%
      \newsavebox{\gptboxtext}%
    \fi%
    \setlength{\fboxrule}{0.5pt}%
    \setlength{\fboxsep}{1pt}%
    \definecolor{tbcol}{rgb}{1,1,1}%
\begin{picture}(7360.00,3400.00)%
    \gplgaddtomacro\gplbacktext{%
      \csname LTb\endcsname
      \put(633,1030){\makebox(0,0)[r]{\strut{}$10^{-14}$}}%
      \csname LTb\endcsname
      \put(633,1300){\makebox(0,0)[r]{\strut{}$10^{-12}$}}%
      \csname LTb\endcsname
      \put(633,1570){\makebox(0,0)[r]{\strut{}$10^{-10}$}}%
      \csname LTb\endcsname
      \put(633,1840){\makebox(0,0)[r]{\strut{}$10^{-8}$}}%
      \csname LTb\endcsname
      \put(633,2110){\makebox(0,0)[r]{\strut{}$10^{-6}$}}%
      \csname LTb\endcsname
      \put(633,2380){\makebox(0,0)[r]{\strut{}$10^{-4}$}}%
      \csname LTb\endcsname
      \put(633,2650){\makebox(0,0)[r]{\strut{}$10^{-2}$}}%
      \csname LTb\endcsname
      \put(772,520){\makebox(0,0){\strut{}$10^{3}$}}%
      \csname LTb\endcsname
      \put(1609,520){\makebox(0,0){\strut{}$10^{4}$}}%
      \csname LTb\endcsname
      \put(2446,520){\makebox(0,0){\strut{}$10^{5}$}}%
    }%
    \gplgaddtomacro\gplfronttext{%
      \csname LTb\endcsname
      \put(1825,2028){\makebox(0,0)[r]{\strut{}(debiased)}}%
      \csname LTb\endcsname
      \put(1825,1865){\makebox(0,0)[r]{\strut{}$r = 0$}}%
      \csname LTb\endcsname
      \put(1825,1702){\makebox(0,0)[r]{\strut{}$r = 2$}}%
      \csname LTb\endcsname
      \put(1825,1538){\makebox(0,0)[r]{\strut{}$r = 32$}}%
      \csname LTb\endcsname
      \put(1825,1375){\makebox(0,0)[r]{\strut{}$r = 64$}}%
      \csname LTb\endcsname
      \put(1825,1212){\makebox(0,0)[r]{\strut{}$r = 128$}}%
      \csname LTb\endcsname
      \put(1590,161){\makebox(0,0){\strut{}n}}%
      \csname LTb\endcsname
      \put(1590,3063){\makebox(0,0){\strut{}$S_1$}}%
    }%
    \gplgaddtomacro\gplbacktext{%
      \csname LTb\endcsname
      \put(2712,1030){\makebox(0,0)[r]{\strut{}}}%
      \csname LTb\endcsname
      \put(2712,1300){\makebox(0,0)[r]{\strut{}}}%
      \csname LTb\endcsname
      \put(2712,1570){\makebox(0,0)[r]{\strut{}}}%
      \csname LTb\endcsname
      \put(2712,1840){\makebox(0,0)[r]{\strut{}}}%
      \csname LTb\endcsname
      \put(2712,2110){\makebox(0,0)[r]{\strut{}}}%
      \csname LTb\endcsname
      \put(2712,2380){\makebox(0,0)[r]{\strut{}}}%
      \csname LTb\endcsname
      \put(2712,2650){\makebox(0,0)[r]{\strut{}}}%
      \csname LTb\endcsname
      \put(2851,520){\makebox(0,0){\strut{}$10^{3}$}}%
      \csname LTb\endcsname
      \put(3689,520){\makebox(0,0){\strut{}$10^{4}$}}%
      \csname LTb\endcsname
      \put(4526,520){\makebox(0,0){\strut{}$10^{5}$}}%
    }%
    \gplgaddtomacro\gplfronttext{%
      \csname LTb\endcsname
      \put(3669,161){\makebox(0,0){\strut{}n}}%
      \csname LTb\endcsname
      \put(3669,3063){\makebox(0,0){\strut{}$S_2$,  no splitting at 0}}%
    }%
    \gplgaddtomacro\gplbacktext{%
      \csname LTb\endcsname
      \put(4792,1030){\makebox(0,0)[r]{\strut{}}}%
      \csname LTb\endcsname
      \put(4792,1300){\makebox(0,0)[r]{\strut{}}}%
      \csname LTb\endcsname
      \put(4792,1570){\makebox(0,0)[r]{\strut{}}}%
      \csname LTb\endcsname
      \put(4792,1840){\makebox(0,0)[r]{\strut{}}}%
      \csname LTb\endcsname
      \put(4792,2110){\makebox(0,0)[r]{\strut{}}}%
      \csname LTb\endcsname
      \put(4792,2380){\makebox(0,0)[r]{\strut{}}}%
      \csname LTb\endcsname
      \put(4792,2650){\makebox(0,0)[r]{\strut{}}}%
      \csname LTb\endcsname
      \put(4931,520){\makebox(0,0){\strut{}$10^{3}$}}%
      \csname LTb\endcsname
      \put(5768,520){\makebox(0,0){\strut{}$10^{4}$}}%
      \csname LTb\endcsname
      \put(6605,520){\makebox(0,0){\strut{}$10^{5}$}}%
      \csname LTb\endcsname
      \put(6706,1030){\makebox(0,0)[l]{\strut{}$10^{-14}$}}%
      \csname LTb\endcsname
      \put(6706,1300){\makebox(0,0)[l]{\strut{}$10^{-12}$}}%
      \csname LTb\endcsname
      \put(6706,1570){\makebox(0,0)[l]{\strut{}$10^{-10}$}}%
      \csname LTb\endcsname
      \put(6706,1840){\makebox(0,0)[l]{\strut{}$10^{-8}$}}%
      \csname LTb\endcsname
      \put(6706,2110){\makebox(0,0)[l]{\strut{}$10^{-6}$}}%
      \csname LTb\endcsname
      \put(6706,2380){\makebox(0,0)[l]{\strut{}$10^{-4}$}}%
      \csname LTb\endcsname
      \put(6706,2650){\makebox(0,0)[l]{\strut{}$10^{-2}$}}%
    }%
    \gplgaddtomacro\gplfronttext{%
      \csname LTb\endcsname
      \put(5749,161){\makebox(0,0){\strut{}n}}%
      \csname LTb\endcsname
      \put(5749,3063){\makebox(0,0){\strut{}$S_2$,  split at 0}}%
    }%
    \gplbacktext
    \put(0,0){\includegraphics[width={368.00bp},height={170.00bp}]{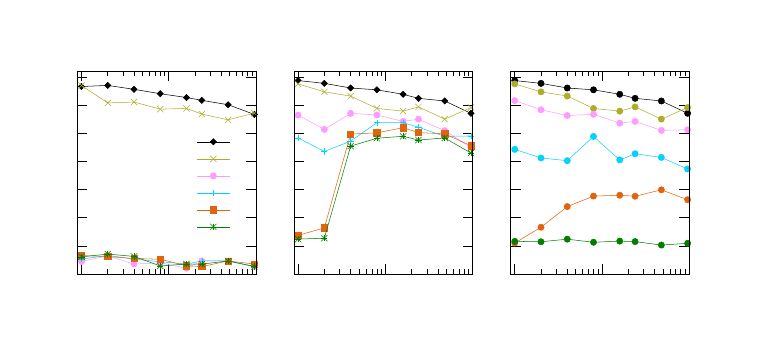}}%
    \gplfronttext
  \end{picture}%
\endgroup

  \vspace{-0.1in}

  \caption{The relative errors $|\ell(\bth \sv \by) - \tilde{\ell}(\bth \sv
  \tby)|/|\ell(\bth \sv \by)|$ for
  various models, ranks, and data sizes. The middle column shows the error in
  naively applying the asymptotic expansion method of Proposition
  \ref{thm:asexp} to obtain $\set{h_k}_{k=0}^{n-1}$ despite $S_2(\omg)$ having
  zero derivatives at the origin, and the right column applies the split
  expansion method from Corollary \ref{cor:asexp}.
  }
  \label{fig:nll_relerr}
\end{figure}

The final panel in Figure \ref{fig:nll_relerr} shows the relative error of the
likelihood approximation with the corrected asymptotic expansion that applies
Corollary \ref{cor:asexp} at the splitting point $\omg_1^r = 0$.  Here we now
see that there is no visible loss in accuracy once the expansions start being
used for tails of the autocovariance sequence, indicating that the
domain-splitting correction restores the accuracy of the expansions. As the
theory would imply and  this figure empirically demonstrates, the rank of the
Whittle correction matrix $\bE$ depends on smoothness properties of the spectral
density.  In this particular case, a rank of $r \approx 100$ is necessary to get
almost every significant digit of the log-likelihood. This low rank makes sense
in light of Section \ref{sec:lowrank_theory}'s asymptotic expansion argument, as
$S_2(\omg)$ is non-smooth at the origin and at the periodic endpoints.
\begin{figure}[!ht]
  \centering
\begingroup
  \makeatletter
  \providecommand\color[2][]{%
    \GenericError{(gnuplot) \space\space\space\@spaces}{%
      Package color not loaded in conjunction with
      terminal option `colourtext'%
    }{See the gnuplot documentation for explanation.%
    }{Either use 'blacktext' in gnuplot or load the package
      color.sty in LaTeX.}%
    \renewcommand\color[2][]{}%
  }%
  \providecommand\includegraphics[2][]{%
    \GenericError{(gnuplot) \space\space\space\@spaces}{%
      Package graphicx or graphics not loaded%
    }{See the gnuplot documentation for explanation.%
    }{The gnuplot epslatex terminal needs graphicx.sty or graphics.sty.}%
    \renewcommand\includegraphics[2][]{}%
  }%
  \providecommand\rotatebox[2]{#2}%
  \@ifundefined{ifGPcolor}{%
    \newif\ifGPcolor
    \GPcolortrue
  }{}%
  \@ifundefined{ifGPblacktext}{%
    \newif\ifGPblacktext
    \GPblacktexttrue
  }{}%
  \let\gplgaddtomacro\g@addto@macro
  \gdef\gplbacktext{}%
  \gdef\gplfronttext{}%
  \makeatother
  \ifGPblacktext
    \def\colorrgb#1{}%
    \def\colorgray#1{}%
  \else
    \ifGPcolor
      \def\colorrgb#1{\color[rgb]{#1}}%
      \def\colorgray#1{\color[gray]{#1}}%
      \expandafter\def\csname LTw\endcsname{\color{white}}%
      \expandafter\def\csname LTb\endcsname{\color{black}}%
      \expandafter\def\csname LTa\endcsname{\color{black}}%
      \expandafter\def\csname LT0\endcsname{\color[rgb]{1,0,0}}%
      \expandafter\def\csname LT1\endcsname{\color[rgb]{0,1,0}}%
      \expandafter\def\csname LT2\endcsname{\color[rgb]{0,0,1}}%
      \expandafter\def\csname LT3\endcsname{\color[rgb]{1,0,1}}%
      \expandafter\def\csname LT4\endcsname{\color[rgb]{0,1,1}}%
      \expandafter\def\csname LT5\endcsname{\color[rgb]{1,1,0}}%
      \expandafter\def\csname LT6\endcsname{\color[rgb]{0,0,0}}%
      \expandafter\def\csname LT7\endcsname{\color[rgb]{1,0.3,0}}%
      \expandafter\def\csname LT8\endcsname{\color[rgb]{0.5,0.5,0.5}}%
    \else
      \def\colorrgb#1{\color{black}}%
      \def\colorgray#1{\color[gray]{#1}}%
      \expandafter\def\csname LTw\endcsname{\color{white}}%
      \expandafter\def\csname LTb\endcsname{\color{black}}%
      \expandafter\def\csname LTa\endcsname{\color{black}}%
      \expandafter\def\csname LT0\endcsname{\color{black}}%
      \expandafter\def\csname LT1\endcsname{\color{black}}%
      \expandafter\def\csname LT2\endcsname{\color{black}}%
      \expandafter\def\csname LT3\endcsname{\color{black}}%
      \expandafter\def\csname LT4\endcsname{\color{black}}%
      \expandafter\def\csname LT5\endcsname{\color{black}}%
      \expandafter\def\csname LT6\endcsname{\color{black}}%
      \expandafter\def\csname LT7\endcsname{\color{black}}%
      \expandafter\def\csname LT8\endcsname{\color{black}}%
    \fi
  \fi
    \setlength{\unitlength}{0.0500bp}%
    \ifx\gptboxheight\undefined%
      \newlength{\gptboxheight}%
      \newlength{\gptboxwidth}%
      \newsavebox{\gptboxtext}%
    \fi%
    \setlength{\fboxrule}{0.5pt}%
    \setlength{\fboxsep}{1pt}%
    \definecolor{tbcol}{rgb}{1,1,1}%
\begin{picture}(7360.00,2820.00)%
    \gplgaddtomacro\gplbacktext{%
      \csname LTb\endcsname
      \put(523,941){\makebox(0,0)[r]{\strut{}$0.1$}}%
      \csname LTb\endcsname
      \put(523,1459){\makebox(0,0)[r]{\strut{}$1$}}%
      \csname LTb\endcsname
      \put(523,1977){\makebox(0,0)[r]{\strut{}$10$}}%
      \csname LTb\endcsname
      \put(523,2288){\makebox(0,0)[r]{\strut{}$40$}}%
      \csname LTb\endcsname
      \put(673,390){\makebox(0,0){\strut{}$10^{3}$}}%
      \csname LTb\endcsname
      \put(1767,390){\makebox(0,0){\strut{}$10^{4}$}}%
    }%
    \gplgaddtomacro\gplfronttext{%
      \csname LTb\endcsname
      \put(25,1504){\rotatebox{-270.00}{\makebox(0,0){\strut{}Time (s)}}}%
      \csname LTb\endcsname
      \put(1578,30){\makebox(0,0){\strut{}n}}%
      \csname LTb\endcsname
      \put(1578,2739){\makebox(0,0){\strut{}gradient}}%
    }%
    \gplgaddtomacro\gplbacktext{%
      \csname LTb\endcsname
      \put(2615,941){\makebox(0,0)[r]{\strut{}}}%
      \csname LTb\endcsname
      \put(2615,1459){\makebox(0,0)[r]{\strut{}}}%
      \csname LTb\endcsname
      \put(2615,1977){\makebox(0,0)[r]{\strut{}}}%
      \csname LTb\endcsname
      \put(2615,2288){\makebox(0,0)[r]{\strut{}}}%
      \csname LTb\endcsname
      \put(2765,390){\makebox(0,0){\strut{}$10^{3}$}}%
      \csname LTb\endcsname
      \put(3859,390){\makebox(0,0){\strut{}$10^{4}$}}%
      \csname LTb\endcsname
      \put(4724,941){\makebox(0,0)[l]{\strut{}}}%
      \csname LTb\endcsname
      \put(4724,1459){\makebox(0,0)[l]{\strut{}}}%
      \csname LTb\endcsname
      \put(4724,1977){\makebox(0,0)[l]{\strut{}}}%
      \csname LTb\endcsname
      \put(4724,2288){\makebox(0,0)[l]{\strut{}}}%
    }%
    \gplgaddtomacro\gplfronttext{%
      \csname LTb\endcsname
      \put(3669,30){\makebox(0,0){\strut{}n}}%
      \csname LTb\endcsname
      \put(3669,2739){\makebox(0,0){\strut{}exact exp. Fisher}}%
    }%
    \gplgaddtomacro\gplbacktext{%
      \csname LTb\endcsname
      \put(4707,941){\makebox(0,0)[r]{\strut{}}}%
      \csname LTb\endcsname
      \put(4707,1459){\makebox(0,0)[r]{\strut{}}}%
      \csname LTb\endcsname
      \put(4707,1977){\makebox(0,0)[r]{\strut{}}}%
      \csname LTb\endcsname
      \put(4707,2288){\makebox(0,0)[r]{\strut{}}}%
      \csname LTb\endcsname
      \put(4857,390){\makebox(0,0){\strut{}$10^{3}$}}%
      \csname LTb\endcsname
      \put(5951,390){\makebox(0,0){\strut{}$10^{4}$}}%
      \csname LTb\endcsname
      \put(6816,941){\makebox(0,0)[l]{\strut{}$0.1$}}%
      \csname LTb\endcsname
      \put(6816,1459){\makebox(0,0)[l]{\strut{}$1$}}%
      \csname LTb\endcsname
      \put(6816,1977){\makebox(0,0)[l]{\strut{}$10$}}%
      \csname LTb\endcsname
      \put(6816,2288){\makebox(0,0)[l]{\strut{}$40$}}%
    }%
    \gplgaddtomacro\gplfronttext{%
      \csname LTb\endcsname
      \put(5694,2238){\makebox(0,0)[r]{\strut{}$\bO(n \log n)$}}%
      \csname LTb\endcsname
      \put(5694,2086){\makebox(0,0)[r]{\strut{}$r = 2$}}%
      \csname LTb\endcsname
      \put(5694,1933){\makebox(0,0)[r]{\strut{}$r = 4$}}%
      \csname LTb\endcsname
      \put(5694,1781){\makebox(0,0)[r]{\strut{}$r = 32$}}%
      \csname LTb\endcsname
      \put(5694,1628){\makebox(0,0)[r]{\strut{}$r = 64$}}%
      \csname LTb\endcsname
      \put(5694,1476){\makebox(0,0)[r]{\strut{}$r = 128$}}%
      \csname LTb\endcsname
      \put(5761,30){\makebox(0,0){\strut{}n}}%
      \csname LTb\endcsname
      \put(5761,2739){\makebox(0,0){\strut{}stoch. exp. Fisher}}%
    }%
    \gplbacktext
    \put(0,0){\includegraphics[width={368.00bp},height={141.00bp}]{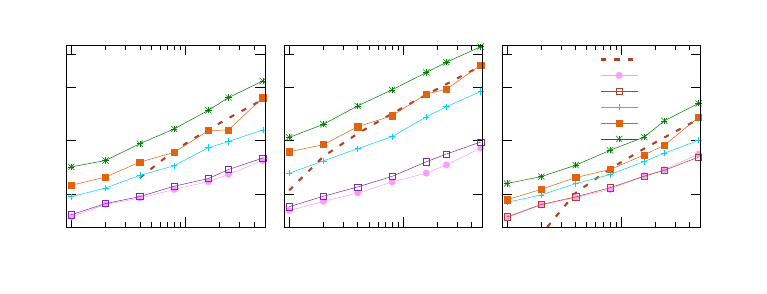}}%
    \gplfronttext
  \end{picture}%
\endgroup

  \vspace{-0.05in}

  \caption{Runtime costs for evaluating the approximated gradient
  (\ref{eq:grad}), the exact Fisher information matrix (\ref{eq:fish}), and the
  symmetrized stochastic expected Fisher information matrix for various data
  sizes and ranks using $S_2(\omg)$ with asymptotic expansions split at $0$.}
  \label{fig:gradfish_runtime}
\end{figure}

Next, we validate the runtime cost and accuracy of the gradient, the exact
expected Fisher matrix, and the symmetrized stochastic expected Fisher matrix of
the negative log-likelihood, although due to the exact values requiring
matrix-matrix products we now only test to matrices of size $48\,000$.  Figure
\ref{fig:gradfish_runtime} provides a summary of the runtime cost of computing
the three quantities. As can be seen, the agreement with the theoretically
expected $\bO(n \log n)$ is clear. And even for $n \approx 50\,000$ data points,
we see that the gradient of the likelihood---which requires a matrix-matrix
product for each term---can be computed to full precision in under ten seconds.
By virtue of requiring only matrix-vector products, which are exceptionally fast
with this simple low-rank structure, the symmetrized stochastic expected Fisher
information matrix is actually slightly faster to compute than the gradient.
Each term of the exact expected Fisher information matrix requires a product of
four matrices, and so the prefactor of the $\bO(n \log n)$ complexity is
significantly higher, with the rank $r=128$ value for $n=48\,000$ taking about
$40$ seconds compared to the $\approx 5 \text{s}$ that its stochastic
counterpart required. This is of course more expensive, but it is also
sufficiently manageable that one could certainly compute it once to high
accuracy to obtain the true asymptotic precision of the MLE once the estimate
$\hat{\bth}^{\text{MLE}}$ has been computed.

To study the accuracy of these approximations, Figure \ref{fig:gradfish_err} shows
two quantities. Letting $\bm{g}(\bth)$ denote the true gradient $\nabla
\ell(\bth \sv \by)$ and $\tilde{\bm{g}}(\bth)$ denote the approximated gradient
$\nabla \tilde{\ell}(\bth \sv \by)$, the top row shows a standard max-norm type
metric of 
\begin{equation*} 
  \max_{j=1, ..., p} \frac{\abs{g_j(\bth) -
  \tilde{g}_j(\bth)}}{\abs{g_j(\bth)}}
\end{equation*}
for models $S_1(\omg)$ and $S_2(\omg)$.  Letting $\bm{I}$ denote the true
expected Fisher information matrix and $\tilde{\bm{I}}$ denote the approximated
one (where the additional approximation layer of stochastic trace estimation is
marked in the legend), the second row of Figure \ref{fig:gradfish_err} shows the
relative operator norm error
$
  \norm{\bm{I}}^{-1}\norm{\bm{I} - \tilde{\bm{I}}},
$
which is arguably a more relevant metric for a Hessian approximation than a
pointwise max-norm. 
\begin{figure}[!ht]
  \centering
\begingroup
  \makeatletter
  \providecommand\color[2][]{%
    \GenericError{(gnuplot) \space\space\space\@spaces}{%
      Package color not loaded in conjunction with
      terminal option `colourtext'%
    }{See the gnuplot documentation for explanation.%
    }{Either use 'blacktext' in gnuplot or load the package
      color.sty in LaTeX.}%
    \renewcommand\color[2][]{}%
  }%
  \providecommand\includegraphics[2][]{%
    \GenericError{(gnuplot) \space\space\space\@spaces}{%
      Package graphicx or graphics not loaded%
    }{See the gnuplot documentation for explanation.%
    }{The gnuplot epslatex terminal needs graphicx.sty or graphics.sty.}%
    \renewcommand\includegraphics[2][]{}%
  }%
  \providecommand\rotatebox[2]{#2}%
  \@ifundefined{ifGPcolor}{%
    \newif\ifGPcolor
    \GPcolortrue
  }{}%
  \@ifundefined{ifGPblacktext}{%
    \newif\ifGPblacktext
    \GPblacktexttrue
  }{}%
  \let\gplgaddtomacro\g@addto@macro
  \gdef\gplbacktext{}%
  \gdef\gplfronttext{}%
  \makeatother
  \ifGPblacktext
    \def\colorrgb#1{}%
    \def\colorgray#1{}%
  \else
    \ifGPcolor
      \def\colorrgb#1{\color[rgb]{#1}}%
      \def\colorgray#1{\color[gray]{#1}}%
      \expandafter\def\csname LTw\endcsname{\color{white}}%
      \expandafter\def\csname LTb\endcsname{\color{black}}%
      \expandafter\def\csname LTa\endcsname{\color{black}}%
      \expandafter\def\csname LT0\endcsname{\color[rgb]{1,0,0}}%
      \expandafter\def\csname LT1\endcsname{\color[rgb]{0,1,0}}%
      \expandafter\def\csname LT2\endcsname{\color[rgb]{0,0,1}}%
      \expandafter\def\csname LT3\endcsname{\color[rgb]{1,0,1}}%
      \expandafter\def\csname LT4\endcsname{\color[rgb]{0,1,1}}%
      \expandafter\def\csname LT5\endcsname{\color[rgb]{1,1,0}}%
      \expandafter\def\csname LT6\endcsname{\color[rgb]{0,0,0}}%
      \expandafter\def\csname LT7\endcsname{\color[rgb]{1,0.3,0}}%
      \expandafter\def\csname LT8\endcsname{\color[rgb]{0.5,0.5,0.5}}%
    \else
      \def\colorrgb#1{\color{black}}%
      \def\colorgray#1{\color[gray]{#1}}%
      \expandafter\def\csname LTw\endcsname{\color{white}}%
      \expandafter\def\csname LTb\endcsname{\color{black}}%
      \expandafter\def\csname LTa\endcsname{\color{black}}%
      \expandafter\def\csname LT0\endcsname{\color{black}}%
      \expandafter\def\csname LT1\endcsname{\color{black}}%
      \expandafter\def\csname LT2\endcsname{\color{black}}%
      \expandafter\def\csname LT3\endcsname{\color{black}}%
      \expandafter\def\csname LT4\endcsname{\color{black}}%
      \expandafter\def\csname LT5\endcsname{\color{black}}%
      \expandafter\def\csname LT6\endcsname{\color{black}}%
      \expandafter\def\csname LT7\endcsname{\color{black}}%
      \expandafter\def\csname LT8\endcsname{\color{black}}%
    \fi
  \fi
    \setlength{\unitlength}{0.0500bp}%
    \ifx\gptboxheight\undefined%
      \newlength{\gptboxheight}%
      \newlength{\gptboxwidth}%
      \newsavebox{\gptboxtext}%
    \fi%
    \setlength{\fboxrule}{0.5pt}%
    \setlength{\fboxsep}{1pt}%
    \definecolor{tbcol}{rgb}{1,1,1}%
\begin{picture}(7360.00,5660.00)%
    \gplgaddtomacro\gplbacktext{%
      \csname LTb\endcsname
      \put(633,3431){\makebox(0,0)[r]{\strut{}$10^{-15}$}}%
      \csname LTb\endcsname
      \put(633,3783){\makebox(0,0)[r]{\strut{}$10^{-12}$}}%
      \csname LTb\endcsname
      \put(633,4136){\makebox(0,0)[r]{\strut{}$10^{-9}$}}%
      \csname LTb\endcsname
      \put(633,4488){\makebox(0,0)[r]{\strut{}$10^{-6}$}}%
      \csname LTb\endcsname
      \put(633,4840){\makebox(0,0)[r]{\strut{}$10^{-3}$}}%
      \csname LTb\endcsname
      \put(807,3073){\makebox(0,0){\strut{}}}%
      \csname LTb\endcsname
      \put(2416,3073){\makebox(0,0){\strut{}}}%
    }%
    \gplgaddtomacro\gplfronttext{%
      \csname LTb\endcsname
      \put(2137,5435){\makebox(0,0){\strut{}$S_1$ gradient}}%
    }%
    \gplgaddtomacro\gplbacktext{%
      \csname LTb\endcsname
      \put(3697,3431){\makebox(0,0)[r]{\strut{}}}%
      \csname LTb\endcsname
      \put(3697,3783){\makebox(0,0)[r]{\strut{}}}%
      \csname LTb\endcsname
      \put(3697,4136){\makebox(0,0)[r]{\strut{}}}%
      \csname LTb\endcsname
      \put(3697,4488){\makebox(0,0)[r]{\strut{}}}%
      \csname LTb\endcsname
      \put(3697,4840){\makebox(0,0)[r]{\strut{}}}%
      \csname LTb\endcsname
      \put(3872,3073){\makebox(0,0){\strut{}}}%
      \csname LTb\endcsname
      \put(5481,3073){\makebox(0,0){\strut{}}}%
      \csname LTb\endcsname
      \put(6706,3431){\makebox(0,0)[l]{\strut{}$10^{-15}$}}%
      \csname LTb\endcsname
      \put(6706,3783){\makebox(0,0)[l]{\strut{}$10^{-12}$}}%
      \csname LTb\endcsname
      \put(6706,4136){\makebox(0,0)[l]{\strut{}$10^{-9}$}}%
      \csname LTb\endcsname
      \put(6706,4488){\makebox(0,0)[l]{\strut{}$10^{-6}$}}%
      \csname LTb\endcsname
      \put(6706,4840){\makebox(0,0)[l]{\strut{}$10^{-3}$}}%
    }%
    \gplgaddtomacro\gplfronttext{%
      \csname LTb\endcsname
      \put(5202,5435){\makebox(0,0){\strut{}$S_2$ (split at 0) gradient}}%
    }%
    \gplgaddtomacro\gplbacktext{%
      \csname LTb\endcsname
      \put(633,822){\makebox(0,0)[r]{\strut{}$10^{-15}$}}%
      \csname LTb\endcsname
      \put(633,1175){\makebox(0,0)[r]{\strut{}$10^{-12}$}}%
      \csname LTb\endcsname
      \put(633,1527){\makebox(0,0)[r]{\strut{}$10^{-9}$}}%
      \csname LTb\endcsname
      \put(633,1879){\makebox(0,0)[r]{\strut{}$10^{-6}$}}%
      \csname LTb\endcsname
      \put(633,2232){\makebox(0,0)[r]{\strut{}$10^{-3}$}}%
      \csname LTb\endcsname
      \put(807,465){\makebox(0,0){\strut{}$10^{3}$}}%
      \csname LTb\endcsname
      \put(2416,465){\makebox(0,0){\strut{}$10^{4}$}}%
    }%
    \gplgaddtomacro\gplfronttext{%
      \csname LTb\endcsname
      \put(1351,1770){\makebox(0,0)[r]{\strut{}$r = 2$}}%
      \csname LTb\endcsname
      \put(1351,1601){\makebox(0,0)[r]{\strut{}$r = 4$}}%
      \csname LTb\endcsname
      \put(1351,1431){\makebox(0,0)[r]{\strut{}$r = 32$}}%
      \csname LTb\endcsname
      \put(1351,1261){\makebox(0,0)[r]{\strut{}$r = 64$}}%
      \csname LTb\endcsname
      \put(3094,1770){\makebox(0,0)[r]{\strut{}$r = 128$}}%
      \csname LTb\endcsname
      \put(3094,1601){\makebox(0,0)[r]{\strut{}$r = 2$ (stoch)}}%
      \csname LTb\endcsname
      \put(3094,1431){\makebox(0,0)[r]{\strut{}$r = 128$ (stoch)}}%
      \csname LTb\endcsname
      \put(2137,105){\makebox(0,0){\strut{}n}}%
      \csname LTb\endcsname
      \put(2137,2826){\makebox(0,0){\strut{}$S_1$ expected Fisher}}%
    }%
    \gplgaddtomacro\gplbacktext{%
      \csname LTb\endcsname
      \put(3697,822){\makebox(0,0)[r]{\strut{}}}%
      \csname LTb\endcsname
      \put(3697,1175){\makebox(0,0)[r]{\strut{}}}%
      \csname LTb\endcsname
      \put(3697,1527){\makebox(0,0)[r]{\strut{}}}%
      \csname LTb\endcsname
      \put(3697,1879){\makebox(0,0)[r]{\strut{}}}%
      \csname LTb\endcsname
      \put(3697,2232){\makebox(0,0)[r]{\strut{}}}%
      \csname LTb\endcsname
      \put(3872,465){\makebox(0,0){\strut{}$10^{3}$}}%
      \csname LTb\endcsname
      \put(5481,465){\makebox(0,0){\strut{}$10^{4}$}}%
      \csname LTb\endcsname
      \put(6706,822){\makebox(0,0)[l]{\strut{}$10^{-15}$}}%
      \csname LTb\endcsname
      \put(6706,1175){\makebox(0,0)[l]{\strut{}$10^{-12}$}}%
      \csname LTb\endcsname
      \put(6706,1527){\makebox(0,0)[l]{\strut{}$10^{-9}$}}%
      \csname LTb\endcsname
      \put(6706,1879){\makebox(0,0)[l]{\strut{}$10^{-6}$}}%
      \csname LTb\endcsname
      \put(6706,2232){\makebox(0,0)[l]{\strut{}$10^{-3}$}}%
    }%
    \gplgaddtomacro\gplfronttext{%
      \csname LTb\endcsname
      \put(5202,105){\makebox(0,0){\strut{}n}}%
      \csname LTb\endcsname
      \put(5202,2826){\makebox(0,0){\strut{}$S_2$ (split at 0) expected Fisher}}%
    }%
    \gplbacktext
    \put(0,0){\includegraphics[width={368.00bp},height={283.00bp}]{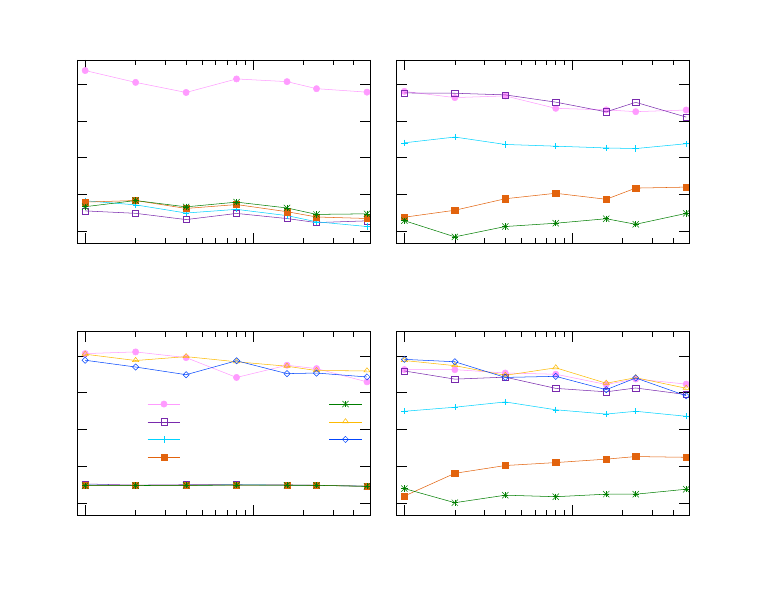}}%
    \gplfronttext
  \end{picture}%
\endgroup

  \vspace{-0.1in}

  \caption{Relative error-type metrics for the approximated gradient and both
  exact and stochastically approximated Fisher information matrices for various
  ranks and sizes.}
  \label{fig:gradfish_err}
\end{figure}
Many of the conclusions from examining the relative error in the log-likelihood
above still apply here. As before, for spectral density $S_1$, the matrix $\bS$
is so well-behaved that even a rank $r=4$ approximation to $\Fr \bS \Fr^H -
\bm{D}$ and its derivatives gives $12+$ digits of accuracy in the gradient and
exact expected Fisher matrix. For the likelihood of $S_2$, due to the roughness
at the origin the rank of the Whittle correction again needs to be increased.
But as one would hope, since a rank of $r=128$ gave effectively every digit in
the log-likelihood, it also gives effectively every digit in the gradient and
exact expected Fisher matrix. Considering that the derivatives of $S_1$ and
$S_2$ with respect to $\theta_2$ are not strictly non-negative, this also
provides a verification that the structure this method exploits also works
well for symmetric Toeplitz matrices that are not positive-definite.

For the stochastic expected Fisher matrix, every trace approximation was done
using $L=72$ SAA vectors, and the SAA-induced error is clearly the dominant
source of disagreement with the true expected Fisher matrix, even for $r=2$.
While it is tempting conclude that one could simply set $r=2$ and obtain a good
estimator for the expected Fisher information matrix in under a second even for
$n = 50\,000$ data points, which may well be true in plenty of circumstances,
the situation can be slightly more complicated than that. When using exact
matrices $\bS$, $\bS_j$, and $\bS_k$, the symmetrized stochastic estimator is an
unbiased estimator. And while the SAA-induced variability in the approximated
expected Fisher using the \emph{matrix} approximation here may be much larger
than the bias induced by setting the rank $r$ too small, the estimator now
having some amount of bias may be a potential source of issues, so we advise
practitioners to be cautious when reducing $r$ from what they deemed necessary
for an accurate log-likelihood.

Finally, as the most direct validation of the method, we conduct a simulation
study and compare the true MLE with the estimator obtained from optimizing our
approximation $\tilde{\ell}(\bth \sv \Fr \by)$ as well as the Whittle and
debiased Whittle approximations. Using the spectral density $S_2$ with true
parameters $\theta_1 = 5$ and $\theta_2 = 35$, we simulate $50$ trials with
$n=10\,000$ data points and compute each estimator as well as the true MLE,
using a rank of $r=128$ in our method for all trials. Figure
\ref{fig:mle_compare} summarizes these results and motivates several
observations.
\begin{figure}[!ht]
  \centering
\begingroup
  \makeatletter
  \providecommand\color[2][]{%
    \GenericError{(gnuplot) \space\space\space\@spaces}{%
      Package color not loaded in conjunction with
      terminal option `colourtext'%
    }{See the gnuplot documentation for explanation.%
    }{Either use 'blacktext' in gnuplot or load the package
      color.sty in LaTeX.}%
    \renewcommand\color[2][]{}%
  }%
  \providecommand\includegraphics[2][]{%
    \GenericError{(gnuplot) \space\space\space\@spaces}{%
      Package graphicx or graphics not loaded%
    }{See the gnuplot documentation for explanation.%
    }{The gnuplot epslatex terminal needs graphicx.sty or graphics.sty.}%
    \renewcommand\includegraphics[2][]{}%
  }%
  \providecommand\rotatebox[2]{#2}%
  \@ifundefined{ifGPcolor}{%
    \newif\ifGPcolor
    \GPcolortrue
  }{}%
  \@ifundefined{ifGPblacktext}{%
    \newif\ifGPblacktext
    \GPblacktexttrue
  }{}%
  \let\gplgaddtomacro\g@addto@macro
  \gdef\gplbacktext{}%
  \gdef\gplfronttext{}%
  \makeatother
  \ifGPblacktext
    \def\colorrgb#1{}%
    \def\colorgray#1{}%
  \else
    \ifGPcolor
      \def\colorrgb#1{\color[rgb]{#1}}%
      \def\colorgray#1{\color[gray]{#1}}%
      \expandafter\def\csname LTw\endcsname{\color{white}}%
      \expandafter\def\csname LTb\endcsname{\color{black}}%
      \expandafter\def\csname LTa\endcsname{\color{black}}%
      \expandafter\def\csname LT0\endcsname{\color[rgb]{1,0,0}}%
      \expandafter\def\csname LT1\endcsname{\color[rgb]{0,1,0}}%
      \expandafter\def\csname LT2\endcsname{\color[rgb]{0,0,1}}%
      \expandafter\def\csname LT3\endcsname{\color[rgb]{1,0,1}}%
      \expandafter\def\csname LT4\endcsname{\color[rgb]{0,1,1}}%
      \expandafter\def\csname LT5\endcsname{\color[rgb]{1,1,0}}%
      \expandafter\def\csname LT6\endcsname{\color[rgb]{0,0,0}}%
      \expandafter\def\csname LT7\endcsname{\color[rgb]{1,0.3,0}}%
      \expandafter\def\csname LT8\endcsname{\color[rgb]{0.5,0.5,0.5}}%
    \else
      \def\colorrgb#1{\color{black}}%
      \def\colorgray#1{\color[gray]{#1}}%
      \expandafter\def\csname LTw\endcsname{\color{white}}%
      \expandafter\def\csname LTb\endcsname{\color{black}}%
      \expandafter\def\csname LTa\endcsname{\color{black}}%
      \expandafter\def\csname LT0\endcsname{\color{black}}%
      \expandafter\def\csname LT1\endcsname{\color{black}}%
      \expandafter\def\csname LT2\endcsname{\color{black}}%
      \expandafter\def\csname LT3\endcsname{\color{black}}%
      \expandafter\def\csname LT4\endcsname{\color{black}}%
      \expandafter\def\csname LT5\endcsname{\color{black}}%
      \expandafter\def\csname LT6\endcsname{\color{black}}%
      \expandafter\def\csname LT7\endcsname{\color{black}}%
      \expandafter\def\csname LT8\endcsname{\color{black}}%
    \fi
  \fi
    \setlength{\unitlength}{0.0500bp}%
    \ifx\gptboxheight\undefined%
      \newlength{\gptboxheight}%
      \newlength{\gptboxwidth}%
      \newsavebox{\gptboxtext}%
    \fi%
    \setlength{\fboxrule}{0.5pt}%
    \setlength{\fboxsep}{1pt}%
    \definecolor{tbcol}{rgb}{1,1,1}%
\begin{picture}(7360.00,5660.00)%
    \gplgaddtomacro\gplbacktext{%
      \csname LTb\endcsname
      \put(633,3644){\makebox(0,0)[r]{\strut{}$2$}}%
      \csname LTb\endcsname
      \put(633,3882){\makebox(0,0)[r]{\strut{}$3$}}%
      \csname LTb\endcsname
      \put(633,4121){\makebox(0,0)[r]{\strut{}$4$}}%
      \csname LTb\endcsname
      \put(633,4360){\makebox(0,0)[r]{\strut{}$5$}}%
      \csname LTb\endcsname
      \put(633,4598){\makebox(0,0)[r]{\strut{}$6$}}%
      \csname LTb\endcsname
      \put(633,4837){\makebox(0,0)[r]{\strut{}$7$}}%
      \csname LTb\endcsname
      \put(633,5075){\makebox(0,0)[r]{\strut{}$8$}}%
      \csname LTb\endcsname
      \put(865,3285){\makebox(0,0){\strut{}$2$}}%
      \csname LTb\endcsname
      \put(1129,3285){\makebox(0,0){\strut{}$3$}}%
      \csname LTb\endcsname
      \put(1392,3285){\makebox(0,0){\strut{}$4$}}%
      \csname LTb\endcsname
      \put(1656,3285){\makebox(0,0){\strut{}$5$}}%
      \csname LTb\endcsname
      \put(1919,3285){\makebox(0,0){\strut{}$6$}}%
      \csname LTb\endcsname
      \put(2183,3285){\makebox(0,0){\strut{}$7$}}%
      \csname LTb\endcsname
      \put(2446,3285){\makebox(0,0){\strut{}$8$}}%
    }%
    \gplgaddtomacro\gplfronttext{%
      \csname LTb\endcsname
      \put(337,4300){\rotatebox{-270.00}{\makebox(0,0){\strut{}$\hat{\theta}_1$}}}%
      \csname LTb\endcsname
      \put(1590,2925){\makebox(0,0){\strut{}$\hat{\theta}_1^{\tiny \textrm{MLE}}$}}%
      \csname LTb\endcsname
      \put(1590,5435){\makebox(0,0){\strut{}Whittle}}%
    }%
    \gplgaddtomacro\gplbacktext{%
      \csname LTb\endcsname
      \put(2712,3644){\makebox(0,0)[r]{\strut{}}}%
      \csname LTb\endcsname
      \put(2712,3882){\makebox(0,0)[r]{\strut{}}}%
      \csname LTb\endcsname
      \put(2712,4121){\makebox(0,0)[r]{\strut{}}}%
      \csname LTb\endcsname
      \put(2712,4360){\makebox(0,0)[r]{\strut{}}}%
      \csname LTb\endcsname
      \put(2712,4598){\makebox(0,0)[r]{\strut{}}}%
      \csname LTb\endcsname
      \put(2712,4837){\makebox(0,0)[r]{\strut{}}}%
      \csname LTb\endcsname
      \put(2712,5075){\makebox(0,0)[r]{\strut{}}}%
      \csname LTb\endcsname
      \put(2945,3285){\makebox(0,0){\strut{}$2$}}%
      \csname LTb\endcsname
      \put(3208,3285){\makebox(0,0){\strut{}$3$}}%
      \csname LTb\endcsname
      \put(3472,3285){\makebox(0,0){\strut{}$4$}}%
      \csname LTb\endcsname
      \put(3735,3285){\makebox(0,0){\strut{}$5$}}%
      \csname LTb\endcsname
      \put(3999,3285){\makebox(0,0){\strut{}$6$}}%
      \csname LTb\endcsname
      \put(4262,3285){\makebox(0,0){\strut{}$7$}}%
      \csname LTb\endcsname
      \put(4526,3285){\makebox(0,0){\strut{}$8$}}%
    }%
    \gplgaddtomacro\gplfronttext{%
      \csname LTb\endcsname
      \put(3669,2925){\makebox(0,0){\strut{}$\hat{\theta}_1^{\tiny \textrm{MLE}}$}}%
      \csname LTb\endcsname
      \put(3669,5435){\makebox(0,0){\strut{}Debiased Whittle}}%
    }%
    \gplgaddtomacro\gplbacktext{%
      \csname LTb\endcsname
      \put(4792,3644){\makebox(0,0)[r]{\strut{}}}%
      \csname LTb\endcsname
      \put(4792,3882){\makebox(0,0)[r]{\strut{}}}%
      \csname LTb\endcsname
      \put(4792,4121){\makebox(0,0)[r]{\strut{}}}%
      \csname LTb\endcsname
      \put(4792,4360){\makebox(0,0)[r]{\strut{}}}%
      \csname LTb\endcsname
      \put(4792,4598){\makebox(0,0)[r]{\strut{}}}%
      \csname LTb\endcsname
      \put(4792,4837){\makebox(0,0)[r]{\strut{}}}%
      \csname LTb\endcsname
      \put(4792,5075){\makebox(0,0)[r]{\strut{}}}%
      \csname LTb\endcsname
      \put(5025,3285){\makebox(0,0){\strut{}$2$}}%
      \csname LTb\endcsname
      \put(5288,3285){\makebox(0,0){\strut{}$3$}}%
      \csname LTb\endcsname
      \put(5552,3285){\makebox(0,0){\strut{}$4$}}%
      \csname LTb\endcsname
      \put(5815,3285){\makebox(0,0){\strut{}$5$}}%
      \csname LTb\endcsname
      \put(6078,3285){\makebox(0,0){\strut{}$6$}}%
      \csname LTb\endcsname
      \put(6342,3285){\makebox(0,0){\strut{}$7$}}%
      \csname LTb\endcsname
      \put(6605,3285){\makebox(0,0){\strut{}$8$}}%
    }%
    \gplgaddtomacro\gplfronttext{%
      \csname LTb\endcsname
      \put(5749,2925){\makebox(0,0){\strut{}$\hat{\theta}_1^{\tiny \textrm{MLE}}$}}%
      \csname LTb\endcsname
      \put(5749,5435){\makebox(0,0){\strut{}Our estimator}}%
    }%
    \gplgaddtomacro\gplbacktext{%
      \csname LTb\endcsname
      \put(633,891){\makebox(0,0)[r]{\strut{}$24$}}%
      \csname LTb\endcsname
      \put(633,1256){\makebox(0,0)[r]{\strut{}$28$}}%
      \csname LTb\endcsname
      \put(633,1621){\makebox(0,0)[r]{\strut{}$32$}}%
      \csname LTb\endcsname
      \put(633,1986){\makebox(0,0)[r]{\strut{}$36$}}%
      \csname LTb\endcsname
      \put(633,2351){\makebox(0,0)[r]{\strut{}$40$}}%
      \csname LTb\endcsname
      \put(784,606){\makebox(0,0){\strut{}$24$}}%
      \csname LTb\endcsname
      \put(1187,606){\makebox(0,0){\strut{}$28$}}%
      \csname LTb\endcsname
      \put(1590,606){\makebox(0,0){\strut{}$32$}}%
      \csname LTb\endcsname
      \put(1993,606){\makebox(0,0){\strut{}$36$}}%
      \csname LTb\endcsname
      \put(2396,606){\makebox(0,0){\strut{}$40$}}%
    }%
    \gplgaddtomacro\gplfronttext{%
      \csname LTb\endcsname
      \put(236,1621){\rotatebox{-270.00}{\makebox(0,0){\strut{}$\hat{\theta}_2$}}}%
      \csname LTb\endcsname
      \put(1590,246){\makebox(0,0){\strut{}$\hat{\theta}_2^{\tiny \textrm{MLE}}$}}%
    }%
    \gplgaddtomacro\gplbacktext{%
      \csname LTb\endcsname
      \put(2712,891){\makebox(0,0)[r]{\strut{}}}%
      \csname LTb\endcsname
      \put(2712,1256){\makebox(0,0)[r]{\strut{}}}%
      \csname LTb\endcsname
      \put(2712,1621){\makebox(0,0)[r]{\strut{}}}%
      \csname LTb\endcsname
      \put(2712,1986){\makebox(0,0)[r]{\strut{}}}%
      \csname LTb\endcsname
      \put(2712,2351){\makebox(0,0)[r]{\strut{}}}%
      \csname LTb\endcsname
      \put(2864,606){\makebox(0,0){\strut{}$24$}}%
      \csname LTb\endcsname
      \put(3266,606){\makebox(0,0){\strut{}$28$}}%
      \csname LTb\endcsname
      \put(3669,606){\makebox(0,0){\strut{}$32$}}%
      \csname LTb\endcsname
      \put(4072,606){\makebox(0,0){\strut{}$36$}}%
      \csname LTb\endcsname
      \put(4475,606){\makebox(0,0){\strut{}$40$}}%
    }%
    \gplgaddtomacro\gplfronttext{%
      \csname LTb\endcsname
      \put(3669,246){\makebox(0,0){\strut{}$\hat{\theta}_2^{\tiny \textrm{MLE}}$}}%
    }%
    \gplgaddtomacro\gplbacktext{%
      \csname LTb\endcsname
      \put(4792,891){\makebox(0,0)[r]{\strut{}}}%
      \csname LTb\endcsname
      \put(4792,1256){\makebox(0,0)[r]{\strut{}}}%
      \csname LTb\endcsname
      \put(4792,1621){\makebox(0,0)[r]{\strut{}}}%
      \csname LTb\endcsname
      \put(4792,1986){\makebox(0,0)[r]{\strut{}}}%
      \csname LTb\endcsname
      \put(4792,2351){\makebox(0,0)[r]{\strut{}}}%
      \csname LTb\endcsname
      \put(4943,606){\makebox(0,0){\strut{}$24$}}%
      \csname LTb\endcsname
      \put(5346,606){\makebox(0,0){\strut{}$28$}}%
      \csname LTb\endcsname
      \put(5749,606){\makebox(0,0){\strut{}$32$}}%
      \csname LTb\endcsname
      \put(6152,606){\makebox(0,0){\strut{}$36$}}%
      \csname LTb\endcsname
      \put(6555,606){\makebox(0,0){\strut{}$40$}}%
    }%
    \gplgaddtomacro\gplfronttext{%
      \csname LTb\endcsname
      \put(5749,246){\makebox(0,0){\strut{}$\hat{\theta}_2^{\tiny \textrm{MLE}}$}}%
    }%
    \gplbacktext
    \put(0,0){\includegraphics[width={368.00bp},height={283.00bp}]{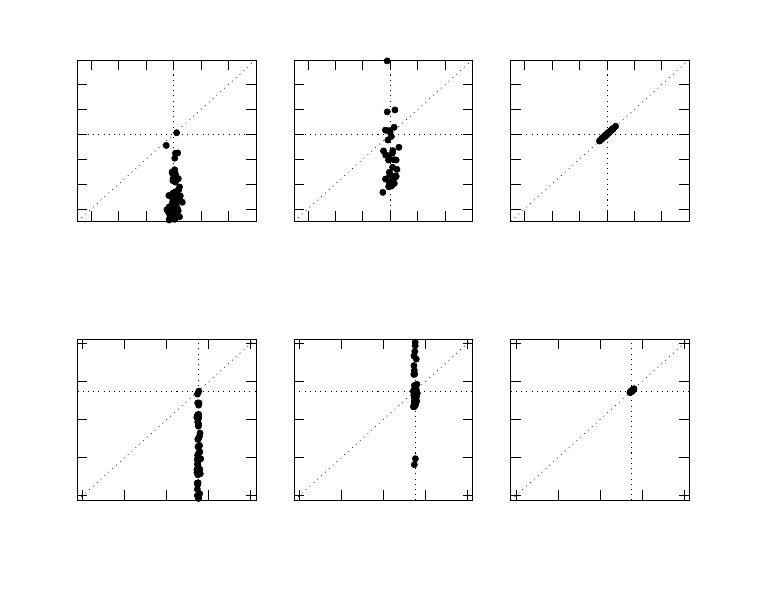}}%
    \gplfronttext
  \end{picture}%
\endgroup

  \vspace{-0.1in}

  \caption{A comparison of the $50$ trials of fitting a process simulated with a
  true SDF of $S_2(\omg)$ with $\theta_1 = 5$ and $\theta_2 = 35$. The first row
  shows a scatter plot of points $(\hat{\theta}_1^{\text{MLE}}, \hat{\theta}_1)$
  with $\hat{\theta}_1$ denoting the alternative estimator, and the lower row
  shows the analog for the estimates of $\theta_2$.}
  \label{fig:mle_compare}
\end{figure}
First, as discussed in the introduction, we readily see that $\hat{\bth}^W$ is
so severely biased that only one trial out of $50$ produced an estimator with
$\hat{\theta}_1 > \theta_{1, \text{true}}$, which obviously makes inferential
questions impossible. The debiased Whittle estimator, on the other hand,
does not show meaningful bias, and comes only at the cost of much higher
variability than the true MLE. Finally, we see that the estimator obtained by
optimizing our log-likelihood is effectively identical to the true MLE in all
cases, with the biggest absolute difference between point estimates being
approximately $10^{-3}$.

\section{Discussion} \label{sec:discussion}

This work introduces a pleasingly simple approximation for the covariance matrix
of the DFT of a stationary time series, which incidentally may provide useful
methods to a much broader range of settings where one encounters symmetric
Toeplitz matrices whose generating sequences have well-behaved Fourier transforms.
In many settings and more general Gaussian process applications, for example
with irregular locations or in multiple dimensions, low rank-type methods are
known to have severe limitations \cite{stein2014}. And for purely approximating
the Toeplitz matrix $\bS$, many of those issues would still be present, so it
is interesting to observe that conjugating with the DFT matrix changes the
algebraic structure of $\bS$ to the degree where a low-rank perturbation of a
diagonal matrix represents $\Fr \bS \Fr^H$ to almost every achievable
significant digit at double precision.

The theory in Section \ref{sec:lowrank_theory} provides a new and complementary
perspective to existing analyses of the accuracy of Whittle approximations.  In
\cite{subba2021}, for example, a theoretical analysis of spectral-domain
approximations is provided where error is characterized by an integer $K$ and
the assumption that the corresponding covariance sequence has a finite $K$-th
moment $\sum_j j^K h_j < \infty$. This work, on the other hand, discusses
non-smooth points of spectral densities and large high-order derivatives as the
principle source of error.  These are of course intimately connected, and the
correspondence of a function's decay rate with the smoothness of its Fourier
transform is well known. If one is designing parametric models via the spectral
density, the considerations that this work motivatives for designing models that
can be approximated well---namely, selecting and designing the number of
non-smooth points, including at endpoints---may be more direct in some circumstances.

\new{As discussed in the introduction,} many methods already exist for
exploiting structure in covariance matrices like $\bS$ and achieving similar or
even slightly better runtime complexity.  What makes this structure
representation interesting, in our opinion, is its simplicity. This
approximation is easy to implement in software, and the resulting $\bm{D} +
\bm{U}\bm{V}^H$ has a very transparent structure.  Unsurprisingly, its runtime
complexity is also very simple and predictable since it is only controlled by
one tuning parameter (the rank $r$). Despite this, it achieves high accuracy for
a very broad class of spectral densities. And for users who are so inclined,
making its rank adaptive would be a very simple modification to the code, for
example following guidelines from \cite{halko2011}. Moreover, error control and
analysis for this structure is more direct than in a general hierarchical matrix
format, which must control the error of individual low rank approximations of
off-diagonal blocks. 

With that said, there are of course circumstances in which the cost of using
this log-likelihood evaluation strategy is not worth the higher prefactor than,
say, the debiased Whittle approximation \cite{sykulski2019}. If one has a
dataset with $50\,000$ measurements that needs to be fitted once, for example,
\new{performing precise estimation with this method takes only one or two
minutes.  And if the point estimates of parameters are themselves scientifically
meaningful, a methodological approach like the one here provides significantly
more precise estimates \emph{and} uncertainty quantification for them, which may
be extremely valuable.} But for an online application, on the other hand, or a
setting in which one must fit a model to tens or hundreds of datasets, a much
cheaper method that runs in $20$ seconds instead of two minutes may be worth the
additional variance of the estimator. \new{With that said, for some families of
spectral densities, like those with many rough points or high-frequency
oscillations, this method is unlikely to provide better estimators than a much
simpler alternative.}

\new{As a final comment, we remind the reader that this method depends crucially
on the fact that $\bS$ is Toeplitz. For irregularly sampled measurements in one or
several dimensions, $\Fr \bS \Fr^H$ does not have such easily extractable
structure. And for gridded data in multiple dimensions, this method could
certainly be extended and applied reasonably directly, but would only
lead to a representation that provides fast matrix-vector products and not a fast
log-determinant. This and other related questions are exciting avenues for
future work.}

\renewcommand{\abstractname}{Acknowledgements}
\begin{abstract}
  The author is grateful to Paul G. Beckman and the two anonymous referees for
  their thoughtful comments, discussion, and proofreading of the manuscript. He
  also thanks Lydia Zoells for her careful copyediting work in the revision
  stage.
\end{abstract}

\bibliography{references_short}

\end{document}